\newcounter{cprop}[section]
\newtheorem{theorem}[cprop]{Theorem}
\theoremstyle{plain}
\newtheorem{corollary}[cprop]{Corollary}
\newtheorem{lemma}[cprop]{Lemma}
\newtheorem{proposition}[cprop]{Proposition}
\numberwithin{equation}{section}
\theoremstyle{definition}
\newtheorem{example}[cprop]{Example}
\theoremstyle{remark}
\newtheorem{remark}[cprop]{Remark}
\email{riedel@math.tu-berlin.de}
 \thanks{{\bf Acknowledgements:} SR is grateful for support from the European Research Council under the European Union’s Seventh Framework Programme ERC grant agreement nr. 258237. Financial support by the DFG via Research Unit FOR 2402 is gratefully acknowledged. Further, the author would like to thank Peter Friz and Michael Scheutzow for valuable discussions and comments. }
 \keywords{bifractional Brownian motion, concentration of measure, Gaussian processes, rough paths, stochastic differential equations, transportation inequalities}
 \subjclass[2010]{28C20, 60F10, 60G15, 60H10}
\begin{document}
\title[Transportation--cost inequalities, Gaussian processes]{Transportation--cost inequalities for diffusions driven by Gaussian processes}
\author{Sebastian Riedel}
\address{Sebastian Riedel \\
Institut f\"ur Mathematik, Technische Universit\"at Berlin, Germany}

\begin{abstract}
  We prove transportation--cost inequalities for the law of SDE solutions driven by general Gaussian processes. Examples include the fractional Brownian motion, but also more general processes like bifractional Brownian motion. In case of multiplicative noise, our main tool is Lyons' rough paths theory. We also give a new proof of Talagrand's transportation--cost inequality on Gaussian Fr\'echet spaces. We finally show that establishing transportation--cost inequalities implies that there is an easy criterion for proving Gaussian tail estimates for functions defined on that space. This result can be seen as a further generalization of the ``generalized Fernique theorem'' on Gaussian spaces \cite[Theorem 11.7]{FH14} used in rough paths theory.

\end{abstract}

\date{\today}
\maketitle

\section*{Introduction}

Transportation--cost inequalities can be seen as a functional approach to the concentration of measure phenomenon (cf. Ledoux's work \cite{Led01} for an introduction to the theory of measure concentration and the work \cite{GL10} by Gozlan and L\'eonard for an overview to transport inequalities). They are usually of the following form: Let $(E,d)$ be a metric space and let $P(E)$ denote the set of probability measures on the Borel sets of $E$. We say that a  \emph{$p$-transportation--cost inequality} holds for a measure $\mu \in P(E)$ if there is a constant $C$ such that
\begin{align}\label{eqn:transp_cost_metric_space}
 \mathcal{W}_p(\nu,\mu) \leq \sqrt{C H(\nu\,|\,\mu)}
\end{align}
holds for all $\nu \in P(E)$. Here $\mathcal{W}_p(\nu,\mu)$ denotes the Wasserstein $p$-distance 
\begin{align*}
 \mathcal{W}_p(\nu,\mu) = \inf_{\pi \in \Pi(\nu,\mu)}\left( \int_{E \times E} d(x,y)^p\, d\pi(x,y) \right)^{\frac{1}{p}}
\end{align*}
where $\Pi(\nu,\mu)$ is the set of all probability measures on the product space $E \times E$ with marginals $\nu$ resp. $\mu$, and $H(\nu\,|\,\mu)$ is the relative entropy (or Kullback--Leibler divergence) of $\nu$ with respect to $\mu$, i.e.
\begin{align*}
 H(\nu\,|\,\mu) = 
 \begin{cases}
  \int \log\left(\frac{d\nu}{d\mu}\right)\,d\nu &\text{if }\nu \ll \mu \\
  + \infty &\text{otherwise.}
 \end{cases}
\end{align*}
If \eqref{eqn:transp_cost_metric_space} holds, we will say that $T_p(C)$ holds for the measure $\mu$. 

Inequalities of type \eqref{eqn:transp_cost_metric_space} were first considered by Marton (cf. \cite{Mar86}, \cite{Mar96}).
%The observation that inequalities if the type \ref{eqn:transp_cost_metric_space} yields the Gaussian concentration property for $\mu$ was first made by Marton (cf. \cite{Mar86}, \cite{Mar96}). 
The cases ``$p = 1$'' and ``$p = 2$'' are of special interest: The $1$-transportation--cost inequality, i.e. the weakest form of \eqref{eqn:transp_cost_metric_space}, is actually equivalent to Gaussian concentration as it was shown by Djellout, Guillin and Wu in \cite{DGW04} (using preliminary results by Bobkov and G\"otze obtained in \cite{BG99}). The $2$-transportation--cost inequality was first proven by Talagrand for the Gaussian measure on $\R^d$ in \cite{Tal96} with the sharp constant $C = 2$ (for this reason it is also called \emph{Talagrand's transportation--cost inequality}). $T_2(C)$ is particularly interesting since it has the \emph{dimension--free tensorization property}: If $T_2(C)$ holds for two measures $\mu_1$ and $\mu_2$, it also holds for the product measure $\mu_1 \otimes \mu_2$ \emph{for the same constant $C$} (see also \cite{GL07} for a general account on tensorization properties for transportation--cost inequalities), and this property yields the dimension--free concentration of measure property for $\mu$. Gozlan realized in \cite{Goz09} that also the converse is true: If $\mu$ possesses the dimension--free concentration of measure property, $T_2(C)$ holds for $\mu$. We also remark that the $2$--transportation-cost inequality gained much attention because it is intimately linked to other famous concentration inequalities, notably to the logarithmic Sobolev inequality: In their celebrated paper \cite{OV00}, Otto and Villani showed that in a smooth Riemannian setting, the logarithmic Sobolev inequality implies the $2$-transportation--cost inequality. Since then, this result has been generalized in several directions, see e.g. the recent work of Gigli and Ledoux \cite{GL13} and the references therein.

In this work, we will mainly study transportation--cost inequalities for the law of a continuous diffusion $Y$ induced by a stochastic differential equation (SDE) driven by a general Gaussian process, i.e. $Y \colon [0,T] \to \R^d$ solves
\begin{align}\label{eqn:SDE_intro}
 dY_t = b(Y_t)\, dt + \sum_{i = 1}^m \sigma_i(Y_t)\circ dX_t^i; \qquad Y_0 = \xi \in \R^d, \quad t\in [0,T]
\end{align}
where $X = (X^1,\ldots,X^m) \colon [0,T] \to \R^m$ is a continuous Gaussian process and $b, \sigma_1,\ldots,\sigma_m$ are vectorfields in $\R^d$. Of course, the equation \eqref{eqn:SDE_intro} needs an interpretation in a non-martingale setting in which It\=o's theory is not applicable. However, in the case when $X$ is a Brownian motion, the equation \eqref{eqn:SDE_intro} can be solved using It\=o's framework, and transportation--cost inequalities were studied in many works: In this context, $T_1(C)$ was first established for the law of $Y$ with respect to the uniform metric by Djellout, Guillin and Wu in \cite{DGW04}. In the same work, also $T_2(C)$ was proven, but for the weaker $L^2$-metric only. Under stronger assumptions on the equation (which guarantee in particular the existence of a unique invariant probability measure and exponential convergence towards it), Wu and Zhang proved in \cite{WZ04} that also $T_2(C)$ holds for the uniform metric. \"Ust\"unel finally proved $T_2(C)$ for the uniform metric 
in \cite{Ust12} in the most general form. However, replacing the Brownian motion by another Gaussian process, not much is known. To the authors knowledge, the only process which was studied, up to a certain extend, is the \emph{fractional Brownian motion} (fBm). By definition, a fBm with Hurst parameter $H \in (0,1)$ is a centered Gaussian process with covariance
\begin{align*}
 R(s,t) = \frac{1}{2} \left( |t|^{2H} + |s|^{2H} - |t - s|^{2H} \right),
\end{align*}
and it is easily seen that we obtain the usual Brownian motion for $H = 1/2$. However, for $H \neq 1/2$ this process is neither a semimartingale nor a Markov process. Guendouzi shows $T_1(C)$ for the $L^1$-metric for a mixed SDE involving a fBm with Hurst parameter $H > 1/2$ in \cite{Gue12}. Saussereau studies more general equations in \cite{Sau12} and shows $T_1(C)$ and $T_2(C)$ also for the uniform metric in particular situations. However, all equations he considers are either driven by a fBm with Hurst parameter $H > 1/2$, have additive noise or are one-dimensional. In fact, all these examples have something in common. Namely, it is known that in these cases, the solution to \eqref{eqn:SDE_intro} is a continuous function of the driving process \emph{path-by-path}. This is not true in the general case of \eqref{eqn:SDE_intro} (and already fails, for instance, for the usual Brownian motion). For studying the equation \eqref{eqn:SDE_intro} in full generality, one needs further ingredients, and we will use Lyon's \emph{rough paths theory} to achieve this goal. Let us mention that our results imply those obtained in  \cite{Sau12} in case of fBm.

There is a further challenge when studying transportation--cost inequalities for solutions to \eqref{eqn:SDE_intro} for general Gaussian processes $X$. The standard tool to establish transportation-cost inequalities, following \cite{FU04} and \cite{DGW04}, is to use the \emph{Girsanov transformation}. In a non-martingale framework, this argument completely breaks down. In case of the fBm, it can still be applied up to a certain point due to the Mandelbrot--van Ness representation of the fBm as a stochastic integral with respect to standard Brownian motion \cite{MVN68}. However, there are many Gaussian processes (and we will encounter a class of them in the forthcoming Example \ref{ex:bifBm}) where such a representation is simply not known. Our approach can be seen as an attempt to prove concentration inequalities for diffusions \emph{avoiding} the Girsanov transformation.

Let us explain our strategy and the contribution of this work. In Section \ref{sec:transp_cost_gaussian_space}, we consider transportation--cost inequalities on infinite dimensional Gaussian spaces. In turns out that in this framework, the quadratic transport inequality even holds for the \emph{Cameron--Martin metric}, which is defined as follows: If $\mathcal{H}$ denotes the Cameron--Martin space associated to a Gaussian measure $\gamma$, set
\begin{align}\label{eqn:def_CM_metric}
	d_{\mathcal{H}}(x,y) = \begin{cases}
		|x - y|_{\mathcal{H}} &\text{if } x - y \in \mathcal{H} \\
		+ \infty &\text{otherwise.}
	\end{cases}
\end{align}
The fact that a transport inequality holds for $\gamma$ and this metric should be surprising at first sight since it is known that for infinite dimensional spaces, the Hilbert space $\mathcal{H}$ has $\gamma$-measure $0$; in other words, $d_{\mathcal{H}}(x,y) = \infty$ ``very often''. In this form, the quadratic transport inequality was first proven by Feyel and \"Ust\"unel on Gaussian Banach spaces in \cite[Theorem 3.1]{FU04} using the Girsanov transformation (cf. also Gentil's PhD thesis \cite{GenPhd01}). The proof we give does not rely on the Girsanov transformation and holds even in Fr\'echet spaces (cf. Theorem \ref{thm:talagrand_strong_form_linear}). Our main tool for proving transport inequalities for solutions to \eqref{eqn:SDE_intro} will be a \emph{contraction principle}, first proven\footnote{In the context of measure concentration, this contraction principle already appeared earlier in a work by Maurey for of infimal concolution inequalities, see \cite[Lemma 2]{Mau91}.} by Djellout, Guillin and Wu in \cite[Lemma 2.1]{DGW04} (we state a slightly more general version in the appendix, cf. Lemma \ref{lemma:transp_ineq_trans_under_loc_lip_maps}), which states that transport inequalities are stable under Lipschitz maps. Together with our result about transport inequalities on Gaussian spaces, all we need to establish is Lipschitzness of the solution map $X(\omega) \mapsto Y(\omega)$ for equation \eqref{eqn:SDE_intro}. This is usually true for additive noise, and we study this case in Section \ref{sec:sde_add_noise} first. Interestingly, due to the strong form of the Gaussian transportation--cost inequality, we obtain such inequalities for the law of $Y$ for metrics which are much larger than the uniform metric (cf. Theorem \ref{thm:transp_cost_ineq_add_noise_case} and the discussion in Example \ref{ex:bifBm}) in the case of $b$ in \eqref{eqn:SDE_intro} being Lipschitz continuous. We further study the case where $b$ only satisfies a one-sided Lipschitz condition in Theorem \ref{thm:transp_cost_ineq_add_noise_one_sided_lip}. We proceed with the multiplicative noise case in Section \ref{sec:sde_mult_noise}. As already mentioned, here we cannot expect the solution map to be Lipschitz continuous anymore in the usual topologies. The key idea is to use the \emph{rough path factorization}: Instead of studying the map $X(\omega) \mapsto Y(\omega)$ directly, we consider an intermediate step; namely, we decompose this map as
\begin{align}\label{eqn:rp_factorization}
 X(\omega) \overset{S}{\mapsto} \mathbf{X}(\omega) \overset{\mathbf{I}}{\mapsto} Y(\omega).
\end{align}
The map $S$ is called \emph{lift map}, and it takes a Gaussian trajectory and maps it to a rough path. It is not continuous, but easy to analyze. The map $\mathbf{I}$ is called \emph{It\=o-Lyons map}, and it is known to be continuous, and even locally Lipschitz continuous in rough paths topology (in fact, this result can be seen as the main theorem in rough paths theory). The point now is that $S$ can be shown to be locally Lipschitz continuous \emph{from $\mathcal{H}$ to a rough paths space}, hence the decomposition, seen as a map from $\mathcal{H}$ to the space of continuous paths, \emph{is} locally Lipschitz continuous. The contraction principle allows us to conclude $T_{2-\varepsilon}(C)$ for any $\varepsilon > 0$, cf. Theorem \ref{thm:main_result_multiplicative_sdes} (the $\varepsilon$-correction stems from the fact the we only have \emph{local} Lipschitzness). Finally, we discuss the link between $T_p(C)$ and tail estimates for functions in Section \ref{sec:tail_estimates_for_functionals} and establish 
a link between $T_p(C)$ and the \emph{generalized Fernique theorem} (cf. \cite[Theorem 17]{DOR15}, \cite[Theorem 11.7]{FH14} and \cite{FO10}) which is of fundamental importance in rough paths theory (cf. \cite[Chapter 11]{FH14}). This section does not depend on the former ones and may be of independent interest.

Let us finally mention that we think that our approach can be carried over to SDEs in infinite dimensions, i.e. to \emph{stochastic partial differential equations}, in particular to those considered in Hairer's theory of \emph{regularity structures} \cite{Hai14} or Gubinelli-Imkeller-Perkowski's approach using \emph{paracontrolled distributions} \cite{GIP15}. Indeed, in both theories, it was understood that (after a possible renormalization), singular equations like the KPZ-equation (cf. also \cite{Hai13}) often have a similar factorization as in \eqref{eqn:rp_factorization}, and this was the basic ingredient we needed for ordinary SDEs as well.

\subsection*{Notation}

If $(X,\mathcal{F})$ is a measurable space, $P(X)$ denotes the set of all probability measures defined on $\mathcal{F}$. If $X$ is a topological space, $\mathcal{F}$ will be usually be the Borel $\sigma$-algebra $\mathcal{B}(X)$. If $X$ and $Y$ are measurable spaces and $\nu \in P(X)$, $\mu \in P(Y)$, then $\Pi(\nu,\mu)$ denotes the set of all product measures on $X \times Y$ with marginals $\nu$ resp. $\mu$. 
% If $c \colon X \times X \to \R_+ \cup \{+ \infty\}$ is measurable and $p>0$, we set
% \begin{align*}
%  \mathcal{W}^c_p(\nu,\mu) = \inf_{\pi \in \Pi(\nu,\mu)} \left( \int_{X \times X} c(x,y)^p\, d\pi(x,y) \right)^{\frac{1}{p}}
% \end{align*}
% for $\nu, \mu \in P(X)$.
If $[S,T]$ is any interval in $\R$, we write $\mathcal{P}([S,T])$ for the set of all finite partitions of $[S,T]$ of the form $S = t_0 < t_1 < \ldots < t_M = T$, $M \in \N$. If $x,y \colon [S,T] \to (B,\| \cdot \|)$ are paths with values in a normed space and $p\geq 1$, we define $p$-variation seminorm and pseudometric as
\begin{align}\label{eqn:def_pvar_dist}
 \|x \|_{p-\text{var};[S,T]} := \sup_{D \in \mathcal{P}([S,T])} \left( \sum_{t_i \in D} \| x_{t_{i+1}} - x_{t_i} \|^p \right)^{\frac{1}{p}}; \qquad d_{p-\text{var};[S,T]}(x,y) := \|x - y \|_{p-\text{var};[S,T]}.
\end{align}
If the time horizon is clear from the context, we sometimes omit the subindex $[S,T]$ in the notation. The set of all continuous paths $x \colon [S,T] \to B$ with $\| x\|_{p-\text{var};[S,T]} < \infty$ is denoted by $C^{p-\text{var}}([S,T];B)$ and we also define $C^{p-\text{var}}_{\xi}([S,T];B) := \{ x \in C^{p-\text{var}}([S,T];B)\,:\, x_S = \xi \}$ for some $\xi \in B$. If $B$ is a Banach space, $C^{p-\text{var}}_{0}([S,T];B)$ is also a Banach space with the norm $\| \cdot \|_{p-\text{var}}$.

\section{Transportation inequality on a Gaussian space}\label{sec:transp_cost_gaussian_space}

In this section, we give a proof of $T_2(2)$ on Gaussian spaces for the Cameron--Martin metric defined in \eqref{eqn:def_CM_metric}, a result which was first proven on Banach spaces by Feyel and \"Ust\"unel \cite[Theorem 3.1]{FU04} using the Girsanov transformation. Our strategy will be to ``approximate'' the infinite dimensional space by finite dimensional ones on which we know from Talagrand's original result that $T_2(2)$ holds.

We start with an abstract approximation result.

\begin{lemma}\label{lemma:approx_cost_functions}
 Let $X$ and $Y$ be Polish spaces and let $(\mu_n)$ and $(\nu_n)$ be sequences of probability measures on $X$ resp. $Y$ which converge weakly to some probability measures $\mu$ resp. $\nu$. Let $c_n \colon X \times Y \to [0,\infty)$ be a nondreasing sequence of bounded, continuous functions such that $c_n \nearrow c$ pointwise where $c \colon X \times Y \to [0,\infty]$. Then, along a subsequence,
 \begin{align*}
  \liminf_{k \to \infty} \inf_{\pi \in \Pi(\mu_{n_k},\nu_{n_k})} \int_{X \times Y} c_{n_k}(x,y)\,d\pi(x,y) \geq \inf_{\pi \in \Pi(\mu,\nu)} \int_{X \times Y} c(x,y)\,d\pi(x,y).
 \end{align*}
 %where the infimum is taken over all probability measures on $X \times Y$ with marginals $\mu$ and $\nu$.
\end{lemma}

\begin{proof}
 For $\pi \in P(X \times Y)$, set
 \begin{align*}
  I_n(\pi) := \int_{X \times Y} c_n(x,y)\,d\pi(x,y) \quad \text{and} \quad I(\pi) = \int_{X \times Y} c(x,y)\,d\pi(x,y).
 \end{align*}
 From continuity of the $c_n$, we know from \cite[Theorem 1.3]{Vil03} that there are measures $\pi_n \in \Pi(\mu_n,\nu_n)$ such that
 \begin{align*}
  I_n(\pi_n) = \inf_{\pi \in \Pi(\mu_n,\nu_n)} I_n(\pi)
 \end{align*}
 for all $n \geq 1$. We claim that the sequence $(\pi_n)$ is tight in $P(X \times Y)$. Indeed: Let $\varepsilon > 0$ be given. From Prokhorov's theorem, the sequences $(\mu_n)$ and $(\nu_n)$ are tight, therefore we can find compact sets $K_1 \subset X$ and $K_2 \subset Y$ such that
 \begin{align*}
  \mu_n(K_1) \geq 1 + \varepsilon/2 \quad \text{and} \quad \nu_n(K_2) \geq 1 + \varepsilon/2
 \end{align*}
 for all $n \geq 1$. This implies that
 \begin{align*}
  \pi_n(K_1 \times K_2) \geq 1 + \varepsilon
 \end{align*}
 for all $n \geq 1$ which shows tightness. Using agains Prokhorov's theorem, we know that there is a subsequence for which $\pi_{n_k} \to \pi^*$ weakly in $P(X \times Y)$ for $k \to \infty$. Let $f \colon X \to \R$ be a bounded, continuous function. From weak convergence,
 \begin{align*}
  \lim_{k \to \infty} \int_X f(x)\, d\mu_{n_k}(x) =  \int_X f(x)\, d\mu(x).
 \end{align*}
 Furthermore,
 \begin{align*}
  \lim_{k \to \infty} \int_{X \times Y} f(x)\, d\pi_{n_k}(x,y) =  \int_{X \times Y} f(x)\, d\pi^*(x,y) = \int_X f(x)\, d\pi^*(x,Y)
 \end{align*}
 which implies that $\pi^*(\cdot,Y) = \mu$. Similarly, $\pi^*(X,\cdot) = \nu$, and we have thus shown that $\pi^* \in \Pi(\mu,\nu)$. From monotonicity, whenever $n\geq m$, $I_n(\pi_n) \geq I_m(\pi_n)$, and therefore
 \begin{align*}
  \liminf_{k \to \infty} I_{n_k}(\pi_{n_k}) \geq \liminf_{k \to \infty} I_m(\pi_{n_k}) \geq I_m(\pi_*)
 \end{align*}
 for every $m \geq 1$. Monotone convergence gives
 \begin{align*}
  \lim_{m \to \infty} I_m(\pi_*) = I(\pi_*),
 \end{align*}
 and thus
 \begin{align*}
  \liminf_{k \to \infty} I_{n_k}(\pi_{n_k}) \geq \lim_{m \to \infty} I_m(\pi_*) = I(\pi_*) \geq \inf_{\pi \in \Pi(\mu,\nu)} I(\pi).
 \end{align*}
 
\end{proof}

In the following, we aim to consider Gaussian measures on linear spaces. Typically, one assumes that the space should be \emph{locally convex}, i.e. its topology is generated by family of seminorms separating points (cf. \cite[Chapter 2 and Appendix A]{Bog98}). It will be convenient for us to assume that the space is also \emph{Polish}, i.e. separable and completely metrizable. Such spaces are also called separable \emph{Fr\'echet spaces}. A \emph{Gaussian Fr\'echet spaces} is a triplet $(F, \mathcal{H},\gamma)$ where $F$ is a separable Fr\'echet spaces, $\gamma$ is a Gaussian measure on the Borel $\sigma$-field $\mathcal{B}(F)$ and $\mathcal{H}$ denotes the \emph{Cameron-Martin space} which is a separable Hilbert space $(\mathcal{H},\langle \cdot,\cdot \rangle)$ lying in $F$ (cf. \cite[Section 2.4]{Bog98} for the precise definition of the Cameron Martin space and \cite[Chapter 2 and 3]{Bog98} for further properties). The induced norm on $\mathcal{H}$ will be denoted by $|\cdot |_{\mathcal{H}}$. Recall the 
definition of the Cameron--Martin metric given in \eqref{eqn:def_CM_metric}.
% definition of the Cameron-Martin metric $d_{\mathcal{H}} \colon F \times F \to [0,\infty]$:
% \begin{align}\label{eqn:def_CM_metric}
% 	d_{\mathcal{H}}(x,y) = \begin{cases}
% 		|x - y|_{\mathcal{H}} &\text{if } x - y \in \mathcal{H} \\
% 		+ \infty &\text{otherwise.}
% 	\end{cases}
% \end{align}

The following theorem is the main result from this section.

\begin{theorem}\label{thm:talagrand_strong_form_linear}
  Let $(F, \mathcal{H},\gamma)$ be a Gaussian Fr\'echet space. Then for any $\nu \in P(F)$,
 \begin{align*}
   \inf_{\pi \in \Pi(\nu,\gamma)} \int_{F \times F} d_{\mathcal{H}}(x,y)^2\, d\pi(x,y) \leq 2 \, H(\nu\,|\,\gamma).
 \end{align*}
\end{theorem}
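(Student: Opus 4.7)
The approach is to apply Proposition \ref{thm:main_frechet} to each approximating pseudometric $d_n$ produced by Lemma \ref{lemma:ex_ps_metrics}, and then pass to the limit $n\to\infty$ by a combination of weak compactness and monotone convergence. Viewing the Banach space $B$ as a separable Fr\'echet space with $d_F(x,y) = \|x-y\|$, each $d_n$ satisfies hypothesis (i) of Proposition \ref{thm:main_frechet} with constant $L_n$ and hypothesis (ii) with constant $C=1$ (by properties (ii) and (iii) of Lemma \ref{lemma:ex_ps_metrics}); moreover $d_n$ is bounded by property (iv). Hence Proposition \ref{thm:main_frechet} immediately yields
\begin{align*}
 \inf_{\pi \in \Pi(\nu,\gamma)} \int_{B\times B} d_n(x,y)^2\,d\pi(x,y) \leq 2\, H(\nu\,|\,\gamma)
\end{align*}
for every $n$, and since $d_n$ is bounded and $\|\cdot\|$-continuous, the infimum is attained at some coupling $\pi_n \in \Pi(\nu,\gamma)$.

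The next step is to extract a limit of the $\pi_n$. Since $B$ is Polish, the marginals $\nu$ and $\gamma$ are tight, so $\Pi(\nu,\gamma)$ is tight (hence relatively weakly compact by Prokhorov) and weakly closed. Therefore some subsequence $\pi_{n_k}$ converges weakly to a coupling $\pi^* \in \Pi(\nu,\gamma)$. For any fixed $m \in \N$, property (ii) of Lemma \ref{lemma:ex_ps_metrics} makes $d_m^2$ bounded and continuous on $B\times B$, while monotonicity (property (i)) gives $d_m \leq d_{n_k}$ as soon as $n_k \geq m$. Consequently,
\begin{align*}
 \int d_m^2\, d\pi^* = \lim_{k\to\infty} \int d_m^2\, d\pi_{n_k} \leq \liminf_{k\to\infty} \int d_{n_k}^2\, d\pi_{n_k} \leq 2\, H(\nu\,|\,\gamma).
\end{align*}

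Finally, applying monotone convergence to $d_m \nearrow d_{\mathcal{H}}$ along $m\to\infty$ yields
\begin{align*}
 \int d_{\mathcal{H}}(x,y)^2\, d\pi^*(x,y) = \lim_{m\to\infty} \int d_m^2\, d\pi^* \leq 2\, H(\nu\,|\,\gamma),
\end{align*}
which gives the theorem, since $\pi^* \in \Pi(\nu,\gamma)$. The only delicate point is the interchange of infimum and limit, and this is resolved by the monotonicity of $d_n$ together with the weak compactness of $\Pi(\nu,\gamma)$; without the monotonicity one would need a lower-semicontinuity argument for $d_{\mathcal{H}}$ itself, which is awkward because $d_{\mathcal{H}}$ takes the value $+\infty$ on a set of full $\gamma\otimes\gamma$ measure.
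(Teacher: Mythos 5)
Your proposal is correct and follows essentially the same route as the paper: the paper likewise applies Proposition \ref{thm:main_frechet} to the pseudometrics $d_n$ of Lemma \ref{lemma:ex_ps_metrics} and then passes to the limit via Lemma \ref{lemma:approx_cost_functions}, whose proof is exactly your compactness-of-$\Pi(\nu,\gamma)$, attainment, weak subsequential limit, monotonicity and monotone-convergence argument. You have merely inlined that approximation lemma rather than citing it, so there is no substantive difference.
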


\begin{proof}
 Note that for every $h, k \in \mathcal{H}$, there are elements $\hat{h}, \hat{k} \in F^*$ such that
 \begin{align*}
  \langle h, k \rangle_{\mathcal{H}} = \langle \hat{h}, \hat{k} \rangle_{L^2(\gamma)} = \int_F \hat{h}(x) \hat{k}(x) \, d \gamma(x),
 \end{align*}
 cf. \cite[Section 2.4 and 3.2.3 Theorem]{Bog98}. Let $(e_n)$ be an orthonormal basis of $\mathcal{H}$. Define
 \begin{align*}
  H_n := \operatorname{span}\{e_1,\ldots, e_n\}
 \end{align*}
 and $p_n \colon F \to H_n$ by
 \begin{align*}
  p_n(x) = \sum_{k = 1}^n \hat{e}_k(x) e_k.
 \end{align*}
 Note that for $h \in \mathcal{H}$, $\hat{e}_k(h) = \langle e_k, h \rangle_{\mathcal{H}}$ by \cite[2.10.5 Lemma]{Bog98}, thus
 \begin{align*}
  p_n(h) = \sum_{k = 1}^n \langle e_k, h \rangle_{\mathcal{H}} e_k.
 \end{align*} 
 We equip the space $H_n$ with the scalar product
 \begin{align*}
  \langle v,w \rangle_{H_n} = \sum_{k = 1}^n \lambda_k \mu_k \quad \text{if} \quad v = \sum_{k = 1}^n \lambda_k e_k,\ w = \sum_{k = 1}^n \mu_k e_k.
 \end{align*}
 Note that with this definition, $\langle p_n(h), p_n(k) \rangle_{H_n} = \langle p_n(h), p_n(k) \rangle_{\mathcal{H}}$. Consider the image measure $\tilde{\gamma}_n := \gamma \circ p_n^{-1}$. Then $(H_n,\tilde{\gamma}_n)$ is a finite dimensional Gaussian space, and we know from Talagrand's result that $T_2(2)$ holds here. Consider the inclusion maps $\iota_n \colon H_n \hookrightarrow F$ and set $\gamma_n := \tilde{\gamma}_n \circ \iota_n^{-1}$. By the contraction principle in Lemma \ref{lemma:transp_ineq_trans_under_loc_lip_maps}, we see that for every $\nu \in P(F)$
\begin{align*}
   \inf_{\pi \in \Pi(\nu,\gamma_n)} \int_{F \times F} \tilde{d}_{n}(x,y)^2\, d\pi(x,y) \leq 2 \, H(\nu\,|\,\gamma_n)
 \end{align*}
 holds for all $n \geq 1$ where
 \begin{align*}
  \tilde{d}_{n}(x,y) = |p_n(x) - p_n(y)|_{\mathcal{H}}.
 \end{align*}
 Set $d_{n}(x,y) := \tilde{d}_{n}(x,y) \wedge n$. Since $d_n \leq \tilde{d}_n$, also 
 \begin{align}\label{eqn:tci_interm}
   \inf_{\pi \in \Pi(\nu,\gamma_n)} \int_{F \times F} d_{n}(x,y)^2\, d\pi(x,y) \leq 2 \, H(\nu\,|\,\gamma_n)
 \end{align}
 holds for every $\nu \in P(F)$ and $n \geq 1$. We collect some facts about the functions $d_n$. First, it is clear by definition that all $d_n \colon F \times F \to [0,\infty)$ are bounded and continuous. Furthermore, for fixed $x,y \in F$,
 \begin{align*}
 \tilde{d}_n(x,y)^2 =  \sum_{k = 1}^n | \hat{e}_k(x - y) |^2 \leq \sum_{k = 1}^{n+1} | \hat{e}_k(x - y) |^2 = \tilde{d}_{n+1}(x,y)^2
  \end{align*}
  which shows that the sequence $(d_n)$ is nondecreasing. We claim that $d_n \nearrow d_{\mathcal{H}}$ pointwise for $n \to \infty$. Indeed, if $x-y \in \mathcal{H}$,
  \begin{align*}
   \lim_{n\to\infty} d_n(x,y)^2 = \sum_{k = 1}^\infty |\langle e_k, x - y \rangle_{\mathcal{H}}|^2  = \left|x - y \right|^2_{\mathcal{H}} = d_{\mathcal{H}}(x,y)^2
  \end{align*}
  by Parseval's identity. Conversely, if $\lim_{n \to \infty} d_n(x,y) < \infty$ for some $x,y \in F$, we may define
  \begin{align*}
   \sum_{k = 1}^\infty  \hat{e}_k(x - y) e_k =: z \in \mathcal{H}.
  \end{align*}
  This implies that
  \begin{align*}
   \sum_{k = 1}^\infty \hat{e}_k(x - y) e_k = \sum_{k = 1}^\infty \hat{e}_k(z)  e_k
  \end{align*}
  and applying $\hat{e}_k$ on both sides shows that $\hat{e}_k (x - y) = \hat{e}_k(z)$ holds for every $k\in \N$. Hence $x - y = z \in \mathcal{H}$ and we have shown the claim. Next, we show that $\gamma_n \to \gamma$ weakly for $n \to \infty$. Let $g \colon F \to \R$ be a bounded, continuous function. Then
  \begin{align*}
   \int_F g(x)\, d\gamma_n(x) = \int_F g\left( \sum_{k = 1}^n \hat{e}_k(x) e_k \right)\, d\gamma(x).
  \end{align*}
  We know from \cite[3.5.1 Theorem]{Bog98} that $\sum_{k = 1}^{\infty} \hat{e}_k(x) e_k = x$ $\gamma$-almost surely, hence by dominated convergence,
  \begin{align*}
   \int_F g(x)\, d\gamma_n(x) \to \int_F g(x)\, d\gamma(x)
  \end{align*}
  for $n \to \infty$ which shows weak convergence. Choose any $\nu \in P(F)$ with $ \nu \ll \gamma$. Set $f := \frac{d \nu}{d \gamma}$ and define $d\nu_n := f\, d\gamma_n$. From \eqref{eqn:tci_interm}, we have
  \begin{align*}
   \inf_{\pi \in \Pi(\nu_n,\gamma_n)} \int_{F \times F} d_{n}(x,y)^2\, d\pi(x,y) \leq 2 \int_F f \log f\, d \gamma_n
  \end{align*}
  for every $n \in \N$. Assume first that $f$ is bounded and continuous. In this case, we have $\nu_n \to \nu$ weakly for $n \to \infty$ and we can use Lemma \ref{lemma:approx_cost_functions} for the left hand side and weak convergence for the right hand side of the above inequality to conclude that indeed
 \begin{align*}
   \inf_{\pi \in \Pi(\nu,\gamma)} \int_{F \times F} d_{\mathcal{H}}(x,y)^2\, d\pi(x,y) \leq 2 \int_F f \log f\, d \gamma
 \end{align*}  
  holds for every $\nu \in P(F)$ with bounded, continuous density. Next, we extend this result to arbitrary density functions. By a result of Wi{\'s}niewski \cite[Theorem 1]{Wis94}, for every measurable map $f \colon F \to \R$ there exists a sequence of continuous functions $(f_n)$ such that $f_n \to f$ $\gamma$-almost surely. Assume first that the density $f$ is bounded by some $C > 0$. Let $(f_n)$ be a sequence of continuous functions converging $\gamma$-a.s. to $f$. We may assume w.l.o.g. that $0 \leq f_n \leq C$ for all $f_n$, otherwise we replace each $f_n$ by $(f_n \wedge C) \vee 0$. Set $ \alpha_n := \| f_n \|_{L^1(\gamma)}$ and $d\nu_n := (f_n/\alpha_n)\, d \gamma$. We have shown that for every $n \in \N$,
  \begin{align*}
   \inf_{\pi \in \Pi(\nu_n,\gamma)} \int_{F \times F} d_{\mathcal{H}}(x,y)^2\, d\pi(x,y) \leq 2 \int_F (f_n/\alpha_n) \log (f_n/\alpha_n) \, d \gamma.
  \end{align*}
  The above inequality implies that also
   \begin{align*}
   \inf_{\pi \in \Pi(\nu_n,\gamma)} \int_{F \times F} d_m(x,y)^2\, d\pi(x,y) \leq 2\int_F (f_n/\alpha_n) \log (f_n/\alpha_n) \, d \gamma
  \end{align*}
  holds for every fixed $n, m \in \N$. From Lebesgue's dominated convergence theorem, we can conclude that $\nu_n \to \nu$ weakly and 
  \begin{align*}
   \int_F (f_n / \alpha_n) \log (f_n / \alpha_n)\, d \gamma \to \int_F f \log f\, d \gamma
  \end{align*}
 for $n \to \infty$. We can use Lemma \ref{lemma:approx_cost_functions} again (now for the fixed cost function $d_m$) to see that
  \begin{align}\label{eqn:transp_cost_d_m}
   \inf_{\pi \in \Pi(\nu,\gamma)} \int_{F \times F} d_m(x,y)^2\, d\pi(x,y) \leq 2 \int_F f \log f\, d \gamma
  \end{align}
  holds for every $m \in \N$ and every bounded density $f$. Now let $f$ be an arbitrary density function. Set $f_n := f \wedge n$, $ \alpha_n := \| f_n \|_{L^1(\gamma)}$ and $d\nu_n := (f_n/\alpha_n)\, d \gamma$. Using monotone convergence, we see that $\nu_n \to \nu$ weakly and $H(\nu_n \,|\,\gamma) \to H(\nu \,|\,\gamma)$ for $n \to \infty$. As before, Lemma \ref{lemma:approx_cost_functions} shows that \eqref{eqn:transp_cost_d_m} holds for every $ \nu \ll \gamma$ with density function $f$ and every $m \in \N$. Taking the limes inferior along a subsequence of $m$ in \eqref{eqn:transp_cost_d_m}, we can use Lemma \ref{lemma:approx_cost_functions} a fourth time to conclude the assertion of our theorem.

\end{proof}

\subsection{Banach spaces}

Let $(B,\|\cdot\|)$ be a separable Banach space and set $d_B(x,y) := \|x - y\|$. As an immediate corollary of Theorem \ref{thm:talagrand_strong_form_linear} we obtain:
\begin{corollary}\label{corollary:weak_conclusion}
 Let $(B, \mathcal{H},\gamma)$ be a Gaussian Banach space. Then for any $\nu \in P(B)$,
 \begin{align*}
  \inf_{\pi \in \Pi(\nu,\gamma)} \int_{B \times B} {d_B}(x,y)^2\,d\pi(x,y) \leq 2 \sigma^2\, H(\nu\,|\,\gamma)
 \end{align*}
 where
 \begin{align}\label{eqn:var_gauss_measure}
  \sigma^2 = \sup_{l \in B^*, \|l\| \leq 1} \int l(x)^2\,d\gamma(x) < \infty.
 \end{align}

\end{corollary}

\begin{proof}
 It is well known that $\sigma < \infty$ and that for every $h\in \mathcal{H}$ one has $\| h \| \leq \sigma |h|_{\mathcal{H}}$, cf. \cite[Chapter 4]{Led96}, which gives the claim.
\end{proof}

\subsection{Rough paths spaces}

In the case of $B = C_0([0,T],\R^d)$, Theorem \ref{thm:talagrand_strong_form_linear} immediately generalizes to rough paths spaces. Let $\gamma$ be a Gaussian measure on $B$ with corresponding Cameron--Martin space $\mathcal{H}$. For the sake of simplicity, we will assume that $\mathcal{H}$ is continuously embedded in $C_0$, otherwise we could have used a smaller space lying in $C_0$ instead. Let $\mathcal{D}$ be a rough paths space (which could either be geometric or non-geometric, a $p$-variation or an $\alpha$-H\"older rough paths space, cf. \cite{LCL07}, \cite{FV10} or \cite{FH14} for a precise definition) and assume that there is a measurable map $S \colon C_0 \to \mathcal{D}$ such that $\pi_1 \circ S = \operatorname{Id}_{C_0}$ holds where $\pi_1 \colon \mathcal{D} \to C_0$ is the projection map. The map $S$ is called a \textit{lift map}. Set $\boldsymbol{\gamma} = \gamma\circ S^{-1}$. Abusing notation, we define $d_{\mathcal{H}} \colon \mathcal{D} \times \mathcal{D} \to \R\cup\{+\infty\}$ as

\begin{align*}
 d_{\mathcal{H}}(\mathbf{x},\mathbf{y}) = \begin{cases}
                                           |\pi_1(\mathbf{x}) - \pi_1(\mathbf{y})|_{\mathcal{H}} &\text{if } \pi_1(\mathbf{x}) - \pi_1(\mathbf{y}) \in \mathcal{H} \\
  + \infty &\text{otherwise.}
                                          \end{cases}
\end{align*}

 \begin{corollary}\label{corollary:talagrand_gaussian_rp_space}
  For any $\boldsymbol{\nu} \in P(\mathcal{D})$,
 \begin{align*}
  \inf_{\boldsymbol{\pi} \in \Pi(\boldsymbol{\nu},\boldsymbol{\gamma})} \int_{\mathcal{D} \times \mathcal{D}} d_{\mathcal{H}}(\mathbf{x},\mathbf{y})^2 \, d\boldsymbol{\pi}(\mathbf{x},\mathbf{y}) \leq 2 \, H(\boldsymbol{\nu}\,|\,\boldsymbol{\gamma}).
 \end{align*}
\end{corollary}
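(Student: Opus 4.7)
The plan is to reduce the statement to Theorem \ref{thm:talagrand_strong_form_linear} by observing that $d_{\mathcal{H}}$ on $\mathcal{D} \times \mathcal{D}$ is, by definition, the pullback of the corresponding cost on $C_0 \times C_0$ under $(\pi_1,\pi_1)$, and by exploiting the lift map $S$ to transport couplings in the other direction.

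First I would assume, without loss of generality, that $H(\boldsymbol{\nu}\,|\,\boldsymbol{\gamma}) < \infty$, the inequality being trivial otherwise. This forces $\boldsymbol{\nu} \ll \boldsymbol{\gamma}$; since $\boldsymbol{\gamma} = S_*\gamma$ is concentrated on $S(C_0)$, so is $\boldsymbol{\nu}$. Set $\nu := (\pi_1)_*\boldsymbol{\nu}$. From $\pi_1 \circ S = \operatorname{Id}_{C_0}$, the restriction of $\pi_1$ to $S(C_0)$ is a measurable bijection onto $C_0$ with measurable inverse $S$, so
\[
\boldsymbol{\nu} = S_*\nu \qquad \text{and} \qquad \boldsymbol{\gamma} = S_*\gamma,
\]
and pulling back Radon--Nikodym derivatives via $S$ yields $H(\boldsymbol{\nu}\,|\,\boldsymbol{\gamma}) = H(\nu\,|\,\gamma)$.

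Next, I would apply Theorem \ref{thm:talagrand_strong_form_linear} to $\nu$ and $\gamma$ on $C_0$: for every $\varepsilon > 0$, choose $\pi_\varepsilon \in \Pi(\nu,\gamma)$ with $\int d_{\mathcal{H}}(x,y)^2\,d\pi_\varepsilon(x,y) \leq 2 H(\nu\,|\,\gamma) + \varepsilon$. Push $\pi_\varepsilon$ forward via $S \otimes S$ to $\boldsymbol{\pi}_\varepsilon := (S\otimes S)_*\pi_\varepsilon$, which lies in $\Pi(\boldsymbol{\nu},\boldsymbol{\gamma})$ by the marginal identification above. Using the definition of $d_{\mathcal{H}}$ on $\mathcal{D}$ together with $\pi_1 \circ S = \operatorname{Id}_{C_0}$,
\[
\int_{\mathcal{D} \times \mathcal{D}} d_{\mathcal{H}}(\mathbf{x},\mathbf{y})^2\,d\boldsymbol{\pi}_\varepsilon = \int_{C_0 \times C_0} d_{\mathcal{H}}(x,y)^2\,d\pi_\varepsilon \leq 2 H(\boldsymbol{\nu}\,|\,\boldsymbol{\gamma}) + \varepsilon,
\]
and letting $\varepsilon \downarrow 0$ proves the corollary.

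The only technical point is the measure-theoretic bookkeeping behind $\boldsymbol{\nu} = S_*\nu$ and the entropy identity; this is routine once one observes that $\pi_1$ restricts to a measurable isomorphism from $S(C_0)$ onto $C_0$ modulo $\boldsymbol{\gamma}$-null sets. If one prefers to avoid this reduction, an equivalent proof disintegrates $\boldsymbol{\nu}(d\mathbf{x}) = \boldsymbol{\nu}(d\mathbf{x}\,|\,x)\,\nu(dx)$ with $x = \pi_1(\mathbf{x})$ and sets $\boldsymbol{\pi}(d\mathbf{x},d\mathbf{y}) = \boldsymbol{\nu}(d\mathbf{x}\,|\,x)\,\pi(dx,dy)\,\delta_{S(y)}(d\mathbf{y})$; combining Theorem \ref{thm:talagrand_strong_form_linear} with the data-processing inequality $H(\nu\,|\,\gamma) \leq H(\boldsymbol{\nu}\,|\,\boldsymbol{\gamma})$ then yields the same conclusion.
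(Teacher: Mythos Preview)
Your proof is correct and follows essentially the same route as the paper. The paper simply observes that $d_{\mathcal{H}}(S(x),S(y)) = d_{\mathcal{H}}(x,y)$ (since $\pi_1\circ S = \mathrm{Id}$), so $S$ is $1$-Lipschitz for $d_{\mathcal{H}}$, and then invokes Lemma~\ref{lemma:transp_ineq_trans_under_loc_lip_maps} together with Theorem~\ref{thm:talagrand_strong_form_linear}; you unroll the same push-forward argument by hand, and the alternative you sketch at the end (disintegration plus the entropy contraction $H(\nu\,|\,\gamma)\le H(\boldsymbol{\nu}\,|\,\boldsymbol{\gamma})$) is precisely what that lemma encapsulates.
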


\begin{proof}
 By definition, $d_{\mathcal{H}}(S(x),S(y)) = d_{\mathcal{H}}(x,y)$, hence $S$ is (in particular) 1-Lipschitz and the result follows from Theorem \ref{thm:talagrand_strong_form_linear} and Lemma \ref{lemma:transp_ineq_trans_under_loc_lip_maps}.
\end{proof}

\section{Applications to diffusions}\label{sec:appl_diff}

\subsection{SDEs with additive noise}\label{sec:sde_add_noise}

In this section, we will consider SDEs of the form
\begin{align}\label{eqn:integral_eqn_add_noise}
 Y_t = \xi + \int_0^t b(Y_s)\,ds + \sum_{i = 1}^m \int_0^t \sigma_i(s)\, dX_s^i 
\end{align}
with $\xi \in \R^d$. Here, $b \colon \R^d \to \R^d$ is a continuous vector field, $\sigma_1, \ldots, \sigma_m \colon [0,T] \to \R^d$ are continuous functions and $X = (X^1,\ldots, X^m) \colon [0,T] \to \R^m$ is a Gaussian process with continuous trajectories. The stochastic integrals in \eqref{eqn:integral_eqn_add_noise} can either be defined pathwise (e.g. as Young integrals, cf. \cite{You36} or \cite[Section 6]{FV10}) or by probabilistic means (e.g. as Wiener integrals). At this stage, we only assume that the stochastic integrals are defined in such a way that they introduce a bounded linear map from the Gaussian space $C([0,T],\R^m)$ to the space $C([0,T],\R^d)$ which implies that the sum of the integrals is again a Gaussian processes. Therefore, there is no loss of generality to consider SDEs of the form
\begin{align}\label{eq:sde_additive_noise}
 Y_t = \xi + \int_0^t b(Y_s)\,ds + X_t % ,\quad Y_0 = \xi \in \R^d
\end{align}
instead of \eqref{eqn:integral_eqn_add_noise} where $X \colon [0,T] \to \R^d$ is an $\R^d$-valued Gaussian process starting at $0$ with continuous sample paths. Under mild regularity assumptions on $b$ (e.g. continuous, locally Lipschitz continuous and linear growth), the equation \eqref{eq:sde_additive_noise} can be solved pathwise for every continuous trajectory of the Gaussian process. We aim to establish concentration inequalities for the law of the solution $Y \colon [0,T] \to \R^d$. Our strategy will be to show that the solution map $X(\omega) \mapsto Y(\omega)$ is Lipschitz continuous, which implies the concentration inequality by the contraction principle stated in Lemma \ref{lemma:transp_ineq_trans_under_loc_lip_maps}. 

In the case of $X$ being a Wiener process, Djellout, Guillin and Wu show in \cite[Proposition 5.4]{DGW04} that the quadratic transportation inequality holds even for the metric $d_{\mathcal{H}}$. However, their analysis relies on the fact that in case of the Wiener process, the Cameron Martin space is explicitly known; it is the Sobolev space $H^1_0 = W_0^{1,2}$. For a general Gaussian process, the Cameron Martin space is usually only implicitly defined, and showing Lipschitz continuity for the corresponding metric is not obvious. On the other hand, there are often continuous embeddings available for the Cameron Martin space into the space of paths with finite $p$-variation. Showing Lipschitz continuity for the $p$-variation metric is a much easier task which will immediately yield concentration inequalities in $p$-variation topology.

We start with a simple calculation.

\begin{proposition}\label{prop:lip_qvar}
 Let $x_1, x_2 \colon [0,T] \to \R^d$ be two continuous paths and choose $\xi^1, \xi^2 \in \R^d$. Consider the equations
 \begin{align}\label{eqn:add_noise_determ_Lip}
  y^i_t = \xi^i + \int_0^t b(y^i_s)\, ds + x^i_t; \quad t \in [0,T],\ i = 1,2
 \end{align}
 where $b \colon \R^d \to \R^d$ is Lipschitz continuous with Lipschitz constant $L$. 
 
 Then the equations \eqref{eqn:add_noise_determ_Lip} have unique, continuous solutions $y^1, y^2 \colon [0,T] \to \R^d$ and the estimate
 \begin{align}\label{eqn:add_noise_eq_lip-q-var}
  \|y^1 - y^2\|_{q-\text{var}} \leq 2^{1 - \frac{1}{q}} \exp\left(2^{1 - \frac{1}{q}} L T\right) \left( L T |\xi^1 - \xi^2| + \|x^1 - x^2\|_{q-\text{var}} \right)
 \end{align}
 holds for every $q \in [1,\infty)$.

\end{proposition}

\begin{proof}
 Existence and uniqueness is classical, we only need to prove the estimate \eqref{eqn:add_noise_eq_lip-q-var}. Fix some $t \in [0,T]$. Let $(t_i)$ be a partition of $[0,t]$. Using the equations shows that
 \begin{align*}
  |y^1_{t_{i+1}} - y^1_{t_i} - y^2_{t_{i+1}} + y^2_{t_i}| \leq L \int_{t_i}^{t_{i+1}} |y^1_s - y^2_s|\, ds + |x^1_{t_{i+1}} - x^1_{t_i} - x^2_{t_{i+1}} + x^2_{t_i}|
 \end{align*}
 for every $t_i < t_{i+1}$. Taking both sides to the power $q$ and summing over all increments gives
 \begin{align*}
  \sum_{t_i} |y^1_{t_{i+1}} - y^1_{t_i} - y^2_{t_{i+1}} + y^2_{t_i}|^q \leq 2^{q-1} L^q \left( \int_0^t |y^1_s - y^2_s|\, ds \right)^q + 2^{q-1} \sum_{t_i} |x^1_{t_{i+1}} - x^1_{t_i} - x^2_{t_{i+1}} + x^2_{t_i}|^q.
 \end{align*}
 Taking now the supremum over all partitions implies
 \begin{align*}
  \|y^1 - y^2\|_{q-\text{var};[0,t]} \leq 2^{1-\frac{1}{q}} L \int_0^t |y^1_s - y^2_s|\, ds + 2^{1-\frac{1}{q}} \|x^1 - x^2\|_{q-\text{var};[0,t]}.
 \end{align*}
 The integral can be estimated by
 \begin{align*}
  \int_0^t |y^1_s - y^2_s|\, ds \leq \int_0^t \|y^1 - y^2\|_{q-\text{var};[0,s]} \, ds + t|\xi^1 - \xi^2|. 
 \end{align*}
 Gronwall's inequality implies the claim.

\end{proof}

  Set $C_{\xi} = C_{\xi}([0,T];\R^d)$.

\begin{theorem}\label{thm:transp_cost_ineq_add_noise_case}
	Assume that $b \colon \R^d \to \R^d$ is Lipschitz continuous with Lipschitz constant $L > 0$. Let $X \colon [0,T] \to \R^d$ be a continuous Gaussian process with Cameron Martin space $\mathcal{H}$, and
	%Assume that $b$ is Lipschitz continuous with Lipschitz constant $L$.
	assume that there is a continuous embedding 
	\begin{align}\label{eqn:CM_embedding_q-var_add_noise}
	    \iota \colon \mathcal{H} \hookrightarrow C^{q-\text{var}}
	 \end{align}
	for some $q \in [1,\infty)$. Let $Y$ be the solution to the SDE \eqref{eq:sde_additive_noise} and let $\mu$ be the law of $Y$. 
	
	Then for every $\nu \in P(C_{\xi})$,
	  \begin{align}\label{eqn:conc_ineq_q-var_add_noise}
	    \inf_{\pi \in \Pi(\nu,\mu)} \int_{C_{\xi} \times C_{\xi}} d_{q-\text{var}}(x,y)^2\, d\pi(x,y) \leq 2^{3 - \frac{2}{q}} \exp\left(2^{2 - \frac{1}{q}} L T \right) \|\iota\|_{\mathcal{H} \hookrightarrow C^{q-\text{var}}}^2 \, H(\nu\,|\,\mu).
	  \end{align}
\end{theorem}

\begin{proof}
	Follows from Theorem \ref{thm:talagrand_strong_form_linear}, the contraction principle in Lemma \ref{lemma:transp_ineq_trans_under_loc_lip_maps} and Proposition \ref{prop:lip_qvar}.
\end{proof}

\begin{remark}
  Embeddings of the form \eqref{eqn:CM_embedding_q-var_add_noise} play a crucial role in Gaussian rough paths theory and we will revisit them also in the next section. Sufficient conditions for such embeddings, as well as many examples of Gaussian processes for which they hold, are given in \cite{FGGR16}. 

\end{remark}

Next, we aim to relax the assumptions on $b \colon \R^d \to \R^d$. In case of the Brownian motion, it is well known (cf. \cite{PR07}) that \eqref{eq:sde_additive_noise} has a unique solution provided $b$ is continuous and satisfies the following \emph{one sided Lipschitz condition}:
    \begin{itemize}
     \item[(i)] There exists a constant $C_1$ such that
     \begin{align}\label{eqn:bound_radial_growth_uniq}
      \langle b(\xi) - b(\zeta), \xi - \zeta \rangle \leq C_1|\xi - \zeta|^2 \quad \text{for every } \xi, \zeta \in \R^d.
     \end{align}
     \end{itemize}
However, one has to be careful when solving \eqref{eq:sde_additive_noise} pathwise: In \cite[p. 43]{CHJ13}, the authors show that there are trajectories which lead to explosion in finite time of solutions to \eqref{eq:sde_additive_noise} although the vector field $b$ satisfies \eqref{eqn:bound_radial_growth_uniq}. In \cite{RS16} and \cite{SS16}, a further condition on $b$ was introduced. Together with \eqref{eqn:bound_radial_growth_uniq}, this condition prevents explosion, even in the more general case of multiplicative noise. This condition takes the following form:
    \begin{itemize}
     \item[(ii)] There exists a constant $C_2$ such that
     \begin{align}\label{eqn:bound_tang_growth_uniq}
      \left| b(\xi) - b(\zeta) - \frac{\langle b(\xi) - b(\zeta) , \xi - \zeta \rangle (\xi - \zeta)}{| \xi - \zeta |^2} \right| \leq C_2|\xi - \zeta| \quad \text{for every } \xi, \zeta \in 
      \R^d \text{ with } \xi - \zeta \neq 0.
     \end{align}
    \end{itemize}

In the following, we will assume both \eqref{eqn:bound_radial_growth_uniq} and \eqref{eqn:bound_tang_growth_uniq}.

Let $x \colon [0,T] \to \R^d$ be continuous. It is shown in \cite[Lemma 4.1 and Lemma 4.2]{RS16} that for $b$ continuous and satisfying \eqref{eqn:bound_radial_growth_uniq} and \eqref{eqn:bound_tang_growth_uniq}, the equation
\begin{align*}
 \dot{z}_t = b(z_t + x_t)
\end{align*}
generates a continuous two-parameter flow.

\begin{lemma}\label{lemma:one_sided_lip_est}
 Let $x^1, x^2 \colon [S,T] \to \R^d$ be continuous. Consider the solutions $z^1, z^2 \colon [S,T] \to \R^d$ to the equations
 \begin{align*}
  \dot{z}^i_t &= b(z^i_t + x^i_t); \quad t \in [S,T] \\
  z^i_S &= \xi^i
 \end{align*}
 for $i = 1,2$ and two initial conditions $\xi^1, \xi^2 \in \R^d$. Assume that 
 \begin{align*}
  |z^1_t - z^2_t| \geq |x^1_t - x^2_t|\quad \text{for all } t\in [S,T].
 \end{align*}
 Then
 \begin{align*}
  \sup_{t \in [S,T]} |z^1_t - z^2_t| \leq e^{\frac{3}{2}|T-S|(C_1 + C_2)} \left( |\xi^1 - \xi^2| + \sqrt{(C_1 + C_2)|T - S|} \sup_{t \in [S,T]} |x^1_t - x^2_t| \right).
 \end{align*}

\end{lemma}

\begin{proof}
 For all $t \in [S,T]$,
 \begin{align*}
  |z^1_t - z^2_t|^2 = |\xi^1 - \xi^2|^2 + 2 \int_S^t \langle z^1_s - z^2_s, b(z_s^1 + x_s^1) - b(z_s^2 + x_s^2) \rangle\, ds .
 \end{align*}
 Fix $s \in [S,t]$. Choose $\alpha, \beta \in \R$ such that
 \begin{align*}
  b(z_s^1 + x_s^1) - b(z_s^2 + x_s^2) = \alpha (z_s^1 + x_s^1 - z_s^2 - x_s^2) + \beta v
 \end{align*}
 where $v \perp (z_s^1 + x_s^1 - z_s^2 - x_s^2)$ in the case $z_s^1 + x_s^1 - z_s^2 - x_s^2 \neq 0$ and $v$ arbitrary but $\beta = 0$ otherwise. By \eqref{eqn:bound_radial_growth_uniq}, $\alpha \leq C_1$ and \eqref{eqn:bound_tang_growth_uniq} implies that
 \begin{align*}
  |\beta| \leq C_2 |z_s^1 + x_s^1 - z_s^2 - x_s^2|.
 \end{align*}
 Note that
 \begin{align*}
  \langle z^1_s - z^2_s, z_s^1 + x_s^1 - z_s^2 - x_s^2 \rangle \geq |z^1_s - z^2_s|^2 - |z^1_s - z^2_s| |x^1_s - x^2_s| \geq 0
 \end{align*}
 by assumption, therefore
 \begin{align*}
  \alpha \langle z^1_s - z^2_s, z_s^1 + x_s^1 - z_s^2 - x_s^2 \rangle \leq \frac{C_1}{2}(3 |z^1_s - z^2_s|^2 + |x^1_s - x^2_s|^2).
 \end{align*}
 Furthermore,
 \begin{align*}
  \beta \langle z^1_s - z^2_s, v \rangle \leq \frac{C_2}{2}(3 |z^1_s - z^2_s|^2 + |x^1_s - x^2_s|^2).
 \end{align*}
 This implies that
 \begin{align*}
  |z^1_t - z^2_t|^2 \leq |\xi^1 - \xi^2|^2 + (C_1 + C_2)|T - S| \sup_{u \in [S,T]} |x^1_u - x^2_u|^2 + 3(C_1 + C_2) \int_S^t |z^1_s - z^2_s|^2\, ds
 \end{align*}
 holds for all $t \in [S,T]$. Gronwall's Lemma gives the claim.

\end{proof}

\begin{proposition}\label{prop:lip_cont_one_sided_cond}
 Let $x_1, x_2 \colon [0,T] \to \R^d$ be two continuous paths starting at $0$ and choose $\xi^1, \xi^2 \in \R^d$. Consider the equations
 \begin{align}\label{eqn:add_noise_determ}
  y^i_t = \xi^i + \int_0^t b(y^i_s)\, ds + x^i_t; \quad t \in [0,T],\ i = 1,2
 \end{align}
 where $b \colon \R^d \to \R^d$ is continuous and satisfies \eqref{eqn:bound_radial_growth_uniq} and \eqref{eqn:bound_tang_growth_uniq}.
 
 Then the equations \eqref{eqn:add_noise_determ} have unique, continuous solutions $y^1, y^2 \colon [0,T] \to \R^d$ and the estimate
 \begin{align}\label{eqn:add_noise_eq_lip}
  \|y^1 - y^2\|_{\infty} \leq  3 e^{2T(C_1 + C_2)} (|\xi^1 - \xi^2| + \|x^1 - x^2\|_{\infty})
 \end{align}
 holds.

\end{proposition}

\begin{proof}
 The fact that the equations \eqref{eqn:add_noise_determ} possess unique solutions is a special case of \cite[Theorem 4.3]{RS16}. We only need to prove the estimate \eqref{eqn:add_noise_eq_lip}. It is easy to see that if $z^i \colon [0,T] \to \R^d$, $i = 1,2$ denote the solutions to
 \begin{align}
  \begin{split}
   \dot{z}^i_t &= b(z^i_t + x^i_t) \\
    z^i_0 &= \xi^i, 
  \end{split}
 \end{align}
 the solutions $y^i$ to \eqref{eqn:add_noise_determ} are given by $z^i + x^i$. Set 
 \begin{align*}
  \beta_t := \left| |z^1_t - z^2_t| - |x^1_t - x^2_t| \right|.
 \end{align*}
 Let $\delta > 0$. We define a sequence of increasing numbers $0 =: \tau_0 < \tau_1 < \ldots$ as follows:
 \begin{align*}
  \tau_1 &:= \inf_{t \geq \tau_0} \{\beta_t > \delta \} \wedge T \\
  \tau_2 &:= \inf_{t \geq \tau_1} \{\beta_t < \delta/2 \} \wedge T \\
  \tau_3 &:= \inf_{t \geq \tau_2} \{\beta_t > \delta \} \wedge T \\
  &\vdots
 \end{align*}
  Note that there is a minimal number $N \in \N$ such that $\tau_N = T$. Indeed, otherwise we constructed an increasing sequence $(\tau_n)$, bounded by $T$, which therefore converges towards some number $\tau$, but $(\beta_{\tau_n})$ can clearly not converge although it is continuous, which is a contradiction. By construction, for every $n = 0,\ldots,N-1$, one either has $\beta_t \leq \delta$ or $\beta_t \geq \delta/2$ for every $t \in [\tau_n,\tau_{n+1}]$. In the first case,
  \begin{align*}
   |z^1_t - z^2_t| \leq \delta + \sup_{u \in [\tau_n,\tau_{n+1}]} |x^1_u - x^2_u|
  \end{align*}
  for every $t \in [\tau_n,\tau_{n+1}]$. In the case $\beta_t \geq \delta/2$, we either have $|x^1_t - x^2_t| \geq \delta/2 + |z^1_t - z^2_t| \geq |z^1_t - z^2_t|$ which implies
  \begin{align*}
   |z^1_t - z^2_t| \leq  \sup_{u \in [\tau_n,\tau_{n+1}]} |x^1_u - x^2_u|
  \end{align*}
  for all $t \in [\tau_n,\tau_{n+1}]$, or $|x^1_t - x^2_t| \leq |z^1_t - z^2_t|$ for all $t \in [\tau_n,\tau_{n+1}]$. In the second case, we can use Lemma \ref{lemma:one_sided_lip_est} to obtain the estimate
  \begin{align*}
   |z^1_t - z^2_t| \leq e^{2(\tau_{n+1} - \tau_n)(C_1 + C_2)} \left( |z^1_{\tau_n} - z^2_{\tau_n}| + \sup_{u \in [\tau_n,\tau_{n+1}]} |x^1_u - x^2_u| \right)
  \end{align*}
  which holds for all $t \in [\tau_n,\tau_{n+1}]$. In the case $n = 0$, we have $|z^1_{\tau_n} - z^2_{\tau_n}| = |\xi^1 - \xi^2|$. For $n \geq 1$, we know that $\beta_t \leq \delta$ for $t \in [\tau_{n-1},\tau_{n}]$, therefore
  \begin{align*}
   |z^1_{\tau_n} - z^2_{\tau_n}| \leq  \sup_{u \in [\tau_{n-1},\tau_{n}]} |z^1_u - z^2_u| \leq \delta + \sup_{u \in [\tau_{n-1},\tau_{n}]} |x^1_u - x^2_u|.
  \end{align*}
  This shows that in all cases we have considered, the estimate
  \begin{align*}
   |z^1_t - z^2_t| \leq 2 e^{2T(C_1 + C_2)} \left( \delta + |\xi^1 - \xi^2| + \|x^1 - x^2\|_{\infty} \right)
  \end{align*}
  holds true which implies that
  \begin{align*}
   \|z^1 - z^2\|_{\infty} \leq 2 e^{2T(C_1 + C_2)} \left( \delta + |\xi^1 - \xi^2| + \|x^1 - x^2\|_{\infty} \right).
  \end{align*}
  Note that this is true for any $\delta > 0$, therefore we can conclude that
  \begin{align*}
   \|z^1 - z^2\|_{\infty} \leq 2 e^{2T(C_1 + C_2)} \left(|\xi^1 - \xi^2| + \|x^1 - x^2\|_{\infty} \right)
  \end{align*}
  holds true. The claim follows from the equality $y^i = z^i + x^i$ and the triangle inequality.

\end{proof}

\begin{theorem}\label{thm:transp_cost_ineq_add_noise_one_sided_lip}
	Assume $b \colon \R^d \to \R^d$ is continuous and satisfies \eqref{eqn:bound_radial_growth_uniq} and \eqref{eqn:bound_tang_growth_uniq}. Let $X \colon [0,T] \to \R^d$ be a continuous Gaussian process with corresponding Gaussian measure $\gamma$ on the space of continuous functions, and let $\sigma^2$ be defined as in \eqref{eqn:var_gauss_measure}. Let $Y$ be the solution to the SDE \eqref{eq:sde_additive_noise} and let $\mu$ be the law of $Y$. 
	%Assume that $b$ is Lipschitz continuous with Lipschitz constant $L$.
% 	Assume that there is a continuous embedding 
% 	\begin{align}\label{eqn:CM_embedding_q-var_add_noise}
% 	    \iota \colon \mathcal{H} \hookrightarrow C^{q-\text{var}}
% 	 \end{align}
% 	for some $q \in [1,\infty)$. 
	
	Then for every $\nu \in P(C_{\xi})$,
	  \begin{align*}
	    \inf_{\pi \in \Pi(\nu,\mu)} \int_{C_{\xi} \times C_{\xi}} \| x - y\|^2_{\infty} \, d\pi(x,y) \leq 18 \sigma^2 e^{4T(C_1 + C_2)} \, H(\nu\,|\,\mu).
	  \end{align*}
\end{theorem}

\begin{proof}
	This is a consequence of Corollary \ref{corollary:weak_conclusion}, the contraction principle in Lemma \ref{lemma:transp_ineq_trans_under_loc_lip_maps} and Proposition \ref{prop:lip_cont_one_sided_cond}.
\end{proof}

\begin{example}\label{ex:bifBm}
 We finally discuss an example to illustrate our findings. Let $B^{H,K} \colon [0,T] \to \R^m$ be a \emph{bifractional Brownian motion}, i.e. a continuous, centered Gaussian process with independent components and the covariance of each component is given by
 \begin{align*}
  R(s,t) = \frac{1}{2^K} \left( (s^{2H} + t^{2H})^K - |t - s|^{2HK} \right)
 \end{align*}
 with $H \in (0,1)$ and $K \in (0,1]$. This process was introduced in \cite{HV03} and further studied e.g. in \cite{RT06,KRT07}. Note that for $K = 1$, we obtain a fractional Brownian motion, and for $K = 1$ and $H = 1/2$ we have the usual Brownian motion. In the general form, it is not known whether the process can be written as a stochastic integral with respect to Brownian motion (as for the fractional Brownian motion) or whether it is adapted to a Brownian filtration. This rules out any Girsanov transformation techniques. It can be shown that $B^{H,K}$ has sample paths of $\alpha$-H\"older regularity for any $\alpha < HK$ (and the sample paths are therefore of finite $1/ \alpha$-variation), but not better. In \cite[Example 2.12]{FGGR16}, it was shown that the corresponding Cameron Martin space can be continuously embedded in the space of paths with finite $q$-variation for $q = (HK + 1/2)^{-1} \vee 1$. We aim to take $B^{H,K}$ as the driver in equation \eqref{eqn:integral_eqn_add_noise}. If all $\sigma_
i$ have finite $p$-variation for $p < (1 - HK)^{-1}$, we can define the stochastic integrals pathwise as Young integrals. We can relax the assumptions on $\sigma$ if we are only interested in defining the stochastic integral by stochastic means. More precisely, if $HK \geq 1/2$ and if all $\sigma_i$ are continuous, we can integrate each $\sigma_i$ against any Cameron Martin path, using Riemann-Stieltjes integrals. If $HK \leq 1/2$ and if all $\sigma_i$ have finite $p$-variation for some $p \geq 1$ satisfying
 \begin{align*}
  \frac{1}{p} > \frac{1}{2} - HK,
 \end{align*}
 we can integrate each $\sigma_i$ against any Cameron Martin path using Young integrals. In these two cases, integration induces a bounded linear map from the associated Cameron Martin space $\mathcal{H}$ to the space $C([0,T],\R^d)$, and therefore also to the Hilbert space $L^2([0,T],\R^d)$. If $(e_n)$ denotes an orthonormal basis of $L^2$, we define a scalar product
 \begin{align*}
  \langle x, y \rangle^{\sim} := \sum_n \frac{1}{n^2} \langle x, e_n \rangle_{L^2} \langle y, e_n \rangle_{L^2}.
 \end{align*}
 Let $\tilde{L}^2$ denote the space $L^2([0,T],\R^d)$ equipped with this scalar product. Then integration induces a Hilbert-Schmidt operator from $\mathcal{H}$ to $\tilde{L}^2$, which can therefore be uniquely extended to the whole space $C([0,T],\R^d)$ almost surely and induces a Gaussian measure on $\tilde{L}^2$ (cf. e.g. \cite[Theorem 3.44]{Hai09}). It can be shown (using the explicit bounds of this map) that the associated Gaussian process on $\tilde{L}^2$ has actually continuous sample paths almost surely, and that its Cameron Martin space can again be continuously embedded in the space of $q$-variation paths with the same choice of $q$. From now on, assume that the $\sigma_i$ satisfy one of the stated regularity assumptions. In case that the drift $b$ is Lipschitz continuous, we can solve \eqref{eqn:integral_eqn_add_noise}, and the law $\mu$ of the solution $Y$ satisfies the quadratic transport inequality
	  \begin{align}\label{eqn:transp_bifrac_lip}
	    \inf_{\pi \in \Pi(\nu,\mu)} \int_{C_{\xi} \times C_{\xi}} d_{q-\text{var}}(x,y)^2\, d\pi(x,y) \leq C \, H(\nu\,|\,\mu)
	  \end{align}
for some constant $C > 0$ and any $\nu \in P(C_{\xi})$ by Theorem \ref{thm:transp_cost_ineq_add_noise_case}. Note that $q < 1/(HK)$ (e.g. $q = 1$ in case of the Brownian motion), and we cannot expect that the sample paths of $Y$ itself have finite $q$-variation. Assuming only continuity, \eqref{eqn:bound_radial_growth_uniq} and \eqref{eqn:bound_tang_growth_uniq} for $b$, we can still solve \eqref{eqn:integral_eqn_add_noise}, and the law $\mu$ of the solution $Y$ satisfies the quadratic transport inequality
	  \begin{align}\label{eqn:transp_bifrac_one_sided_lip}
	    \inf_{\pi \in \Pi(\nu,\mu)} \int_{C_{\xi} \times C_{\xi}} \| x - y\|_{\infty}^2 \, d\pi(x,y) \leq C \, H(\nu\,|\,\mu)
	  \end{align}
for another constant $C > 0$ and any $\nu \in P(C_{\xi})$ by Theorem \ref{thm:transp_cost_ineq_add_noise_one_sided_lip}. 

In case of the fractional Brownian motion (i.e. $K = 1$), the transport inequalitities \eqref{eqn:transp_bifrac_lip} and \eqref{eqn:transp_bifrac_one_sided_lip} may be compared to the corresponding results obtained in \cite{Sau12} (namely Theorem 1 and Theorem 3). Note that our results imply those and are even stronger in several regards (quadratic transport inequality instead of simple one, larger metric, less regularity assumptions on the vector fields).
\end{example}

\subsection{SDEs with multiplicative noise}\label{sec:sde_mult_noise}

Next we will consider SDEs with multiplicative noise, i.e. equations of the form

\begin{align}\label{eqn:Strat_SDE_mult_noise_case}
  Y_t = \xi + \int_0^t b(Y_s)\,ds + \sum_{i = 1}^m \int_0^t \sigma_i(Y_s)\,\circ dX^i_s
\end{align}
where $\xi \in \R^d$, $X = (X^1,\ldots,X^m)$ is a continuous $m$-dimensional Gaussian process and $b,\sigma_1,\ldots \sigma_m \colon \R^d \to \R^d$ are continuous vector fields. The problem in \eqref{eqn:Strat_SDE_mult_noise_case} is of course to make sense of the stochastic integrals if $X$ is not a martingale.

We start to discuss a simple case; namely, we assume that the driving process is one dimensional. Under further assumptions on the vector fields, we can use the Doss-Sussmann representation to define the solution to \eqref{eqn:Strat_SDE_mult_noise_case} pathwise for any continuous driving signal\footnote{Note that we can also use rough paths theory for $m = 1$ to make sense of \eqref{eqn:Strat_SDE_mult_noise_case} since the iterated integrals are canonically given as products in this case.}. If we further assume that also the solution space is one dimensional we can follow \cite{Sau12} to derive the following result:

\begin{theorem}
 Assume $m = d = 1$ and consider the equation
\begin{align}\label{eqn:SDE_mult_noise_case_1d}
  Y_t = \xi + \int_0^t b(Y_s)\,ds + \int_0^t \sigma(Y_s)\,\circ dX_s
\end{align}
where $X \colon [0,T] \to \R$ is a continuous Gaussian process. We further assume that $b$ is bounded by some constant $B$ and Lipschitz continuous with Lipschitz constant $L_b$. For the diffusion vector field $\sigma$, we assume that it is Lipschitz continuous with Lipschitz constant $L_{\sigma}$ and that there are constants $0 < \sigma_1 \leq \sigma_2$ such that $\sigma_1 \leq \sigma(x) \leq \sigma_2$ for all $x \in \R$.

Then the equation \eqref{eqn:SDE_mult_noise_case_1d} has a unique continuous solution $Y$ and its law $\mu$ satisfies the quadratic transport inequality
	  \begin{align*}
	    \inf_{\pi \in \Pi(\nu,\mu)} \int_{C_{\xi} \times C_{\xi}} \| x - y\|_{\infty}^2 \, d\pi(x,y) \leq C \, H(\nu\,|\,\mu)
	  \end{align*}
for all $\nu \in P(C_{\xi})$ where $C > 0$ is a constant depending on the variance of the Gaussian measure given in \eqref{eqn:var_gauss_measure}, $T$ and all constants above.

\end{theorem}

\begin{proof}
 Under the stated conditions, one can show, using the Lamperti transform (cf. \cite[proof of Theorem 12 on p. 12]{Sau12}) that the solution map associated to \eqref{eqn:Strat_SDE_mult_noise_case} is Lipschitz continuous on the space of continuous functions. The result follows from Corollary \ref{corollary:weak_conclusion} and the contraction principle in Lemma \ref{lemma:transp_ineq_trans_under_loc_lip_maps}.
\end{proof}

Note that this result generalizes \cite[Theorem 2]{Sau12} to arbitrary Gaussian processes. It is even stronger than \cite[Theorem 2]{Sau12} since we can deduce the quadratic transportation inequality, not only the simple one. We can also deduce the quadratic transportation inequality for general Gaussian processes under the conditions stated in \cite[Theorem 4]{Sau12} for the uniform metric. Indeed, an inspection of the proof reveals that under these conditions, the solution map associated to \eqref{eqn:Strat_SDE_mult_noise_case} is again Lipschitz, therefore we can conclude as before.

Now assume that $X$ is an $m$-dimensional Brownian motion. In contrast to the additive noise case or the one dimensional case, the solution map $I(\cdot,\xi) \colon C_0([0,T],\R^m) \to C_{\xi}([0,T],\R^d)$ which assigns to each Brownian path the solution path to the SDE \eqref{eqn:Strat_SDE_mult_noise_case} will in general \emph{not} be \mbox{(Lipschitz-) continuous}. This issue can be overcome using Lyons' rough paths theory. Indeed, rough paths theory shows that there is a Polish space $\mathcal{D}^{0,p}_g$ (cf. \cite[Definition 9.15 and Proposition 8.25]{FV10} for the precise definition) such that the diagram
\begin{align}\label{eqn:comm_diag}
\begin{tikzpicture}[node distance=2cm, auto]
  \node (C) {$\mathcal{D}^{0,p}_g$};
  \node (P) [below of=C] {$C_0$};
  \node (Ai) [right of=P] {$C_{\xi}$};
  \draw[->] (C) to node {$\mathbf{I}(\cdot,\xi)$} (Ai);
  \draw[->] (P) to node [swap] {$S$} (C);
  \draw[->] (P) to node [swap] {$I(\cdot,\xi)$} (Ai);
\end{tikzpicture}
\end{align}
commutes almost surely and the map $\mathbf{I}(\cdot,\xi) \colon \mathcal{D}^{0,p}_g \to C_{\xi}$ is locally Lipschitz continuous. The map $S \colon C_0 \to \mathcal{D}^{0,p}_g$ is constructed w.r.t. the Wiener measure on the path space $C_0$. Using a pathwise approach, one is not restricted to Wiener measure and it is indeed possible to construct lift maps $S$ w.r.t. more general Gaussian measures $\gamma$ (cf. \cite{CQ02}, \cite{FV10-2}). In this case, one \emph{defines} $I(\cdot,\xi) := \mathbf{I}(S(\cdot),\xi)$ which gives rise to solutions of SDEs of the form
\begin{align}\label{eqn:mult_case_gaussian_sde}
  Y_t = \xi + \int_0^t b(Y_t) + \sum_{i = 1}^m \int_0^t \sigma_i(Y_s)\,\circ dX^i_s %,\quad Y_0 = \xi \in \R^m
\end{align}
where $X = (X^1,\ldots,X^m)$ is the canonical process induced by the Gaussian measure $\gamma$.
%(for simplicity, we dropped the drift term here which does not cause problems anyway). 
Our key result will be that for \textit{Brownian-like} Gaussian processes (we will be more precise later), we have an estimate of the form
\begin{align*}
 \| I(x,\xi) - I(y,\xi) \|_{p-\text{var}} \leq L(y) d_{\mathcal{H}}(x,y)
\end{align*}
almost surely for every $x,y \in C_0$ where $L$ is a random variable which possesses every moment w.r.t. the Gaussian measure $\gamma$. Together with Lemma \ref{lemma:transp_ineq_trans_under_loc_lip_maps}, this yields a transportation inequality which is stated in Theorem \ref{thm:main_result_multiplicative_sdes}.

We will not make an attempt to give an overview to rough paths theory since we will use it merely as a tool. Instead, we refer to the monographs \cite{LQ02}, \cite{LCL07}, \cite{FV10} and \cite{FH14}. The terms and notation we are using coincides with the one from \cite{FV10} with the only exception that we use the symbol $\mathcal{D}_g^{0,p}$ to denote the space of geometric $p$-variation rough paths $C^{0,p-\text{var}}_0([0,T];G^{[p]}(\R^m))$ equipped with the $p$-variation metric. By \cite[Proposition 8.25]{FV10}, this space is Polish. 

We start with some deterministic estimates for rough paths. If $\omega$ is a control function and $\alpha > 0$, recall the definition of $N_{\alpha}(\omega;[s,t])$ resp. of $N_{\alpha}(\mathbf{x};[s,t])$ for geometric rough paths $\mathbf{x}$ (\cite{CLL13}, \cite{FR12b}). The next proposition is a version of \cite[Theorem 4]{BFRS16} for the $p$-variation metric.

\begin{proposition}
\label{prop:loc_lip_integrability_p-var_top} Let $\mathbf{x}^1$ and $\mathbf{x}^2$ be weakly geometric $p$-rough paths for some $p \geq 1$. Consider the rough differential equations (RDEs)
\begin{equation*}
dy_{t}^{j} = \sigma^{j}(y_{t}^{j})\,d\mathbf{x}_{t}^{j};\ y_{S}^{j}\in \mathbb{R%
}^{d}
\end{equation*}%
for $j=1,2$ on some interval $[S,T]$ where $f^{1} = (f^1_i)_{i = 1,\ldots,m}$ and $f^{2} = (f^2_i)_{i=1,\ldots,m}$ are two families of vector
fields in $\R^d$, $\theta > p$ and $\beta $ is a bound on\footnote{We mean Lipschitz it the sense of Stein , cf. \cite[Chapter 10]{FV10}} $|\sigma^{1}|_{Lip^{\theta }}$ and $%
|\sigma^{2}|_{Lip^{\theta }}$. 

Then for every $\alpha >0$ there is a constant $%
C=C(\theta ,p,\beta ,\alpha )$ such that%
\begin{eqnarray*}
d_{p-\text{var};[S,T]}( y^{1},y^{2}) &\leq
&\ C\left[ |y_{S}^{1}-y_{S}^{2}|+\left\vert \sigma^{1} - \sigma^{2}\right\vert _{\text{%
Lip}^{\theta -1}}+\rho _{p-\text{var};[S,T]}(\mathbf{x}^{1},\mathbf{x}^{2})%
\right] \\
&&\times ( \|\mathbf{x}^1 \|_{p-\text{var};[S,T]} + \|\mathbf{x}^2 \|_{p-\text{var};[S,T]} + 1 ) \\
&&\times \exp \left\{ C\left( N_{\alpha}(\mathbf{x}^{1};[S,T])+N_{\alpha
}(\mathbf{x}^{2};[S,T])+1\right) \right\}
\end{eqnarray*}%
holds.
\end{proposition}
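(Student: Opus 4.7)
The plan is to adapt the $\alpha$-H\"older argument of \cite[Theorem 4]{BFRS13} to the $p$-variation framework via the classical three-step scheme (short-interval Lipschitz estimate, greedy partition controlled by $N_\alpha$, recursive patching).

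First, I would invoke the standard local Lipschitz estimate for the It\=o-Lyons map on any subinterval $[s,t] \subseteq [S,T]$ on which both $\|\mathbf{x}^j\|^p_{p-\text{var};[s,t]} \leq \alpha^*$, where $\alpha^*$ is a small threshold depending only on $\theta,p,\beta$ (\cite[Chapter 10]{FV10}):
\begin{align*}
	d_{p-\text{var};[s,t]}(y^1,y^2) \leq C_1\bigl[|y_s^1-y_s^2| + |f^1-f^2|_{\text{Lip}^{\theta-1}} + \rho_{p-\text{var};[s,t]}(\mathbf{x}^1,\mathbf{x}^2)\bigr]\bigl(\|\mathbf{x}^1\|_{p-\text{var};[s,t]}+\|\mathbf{x}^2\|_{p-\text{var};[s,t]}+1\bigr).
\end{align*}
Then I would choose a greedy partition $S = \tau_0 < \tau_1 < \cdots < \tau_M = T$ so that each subinterval satisfies $\max_j \|\mathbf{x}^j\|^p_{p-\text{var};[\tau_i,\tau_{i+1}]} \leq \alpha^*$; by the very definition of $N_{\alpha^*}$, the number of pieces is bounded by $M \leq N_{\alpha^*}(\mathbf{x}^1;[S,T])+N_{\alpha^*}(\mathbf{x}^2;[S,T])+1$, and a standard comparison between $N_{\alpha^*}$ and the prescribed $N_\alpha$ introduces only a multiplicative constant which can be absorbed into the exponential.

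Next I would patch the local estimates recursively. Setting $\Delta_i := |y_{\tau_i}^1 - y_{\tau_i}^2|$ and $E_i := |f^1-f^2|_{\text{Lip}^{\theta-1}} + \rho_{p-\text{var};[\tau_i,\tau_{i+1}]}(\mathbf{x}^1,\mathbf{x}^2)$, the triangle inequality $\Delta_{i+1} \leq \Delta_i + d_{p-\text{var};[\tau_i,\tau_{i+1}]}(y^1,y^2)$ combined with the local estimate yields a linear recursion $\Delta_{i+1} \leq (1+C_2)\Delta_i + C_2 E_i$ (with $C_2 = C_2(\theta,p,\beta,\alpha^*)$, since the ambient multiplicative factor is bounded by a constant on each short piece). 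Iterating gives $\Delta_i \leq (1+C_2)^M\bigl(\Delta_0 + \sum_j E_j\bigr)$ and the same control of each local $d_{p-\text{var};[\tau_i,\tau_{i+1}]}(y^1,y^2)$ up to a constant.

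Finally, to pass from local to global $p$-variation, I would use the elementary inequality
\begin{align*}
	\|z\|^p_{p-\text{var};[S,T]} \leq K_p \sum_{i=0}^{M-1} \|z\|^p_{p-\text{var};[\tau_i,\tau_{i+1}]},
\end{align*}
with $K_p$ depending only on $p$: refine an arbitrary partition of $[S,T]$ to include the $\tau_i$, then apply $(\sum_k a_k)^p \leq M^{p-1}\sum_k a_k^p$. Applying this to $z = y^1-y^2$ and using the recursive bound, together with the analogous summation for the $\rho_{p-\text{var}}$ increments and the super-additivity of $\|\mathbf{x}^j\|^p_{p-\text{var}}$ (which recovers the global multiplier $\|\mathbf{x}^1\|_{p-\text{var};[S,T]}+\|\mathbf{x}^2\|_{p-\text{var};[S,T]}+1$), one obtains the desired estimate after absorbing the polynomial-in-$M$ factors into $\exp(CM) = \exp(C(N_\alpha(\mathbf{x}^1)+N_\alpha(\mathbf{x}^2)+1))$. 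The main obstacle is precisely this local-to-global step: unlike the H\"older seminorm treated in \cite{BFRS13}, the $p$-variation is super-additive in the interval, which forces the $M^{p-1}$ refinement cost and requires that all constants in the recursion and summation be tracked carefully so that this cost is cleanly absorbed into the exponential rather than appearing as a loose polynomial prefactor.
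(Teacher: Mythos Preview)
Your approach is correct and would prove the proposition, but the paper handles the local-to-global step (the obstacle you flag) differently and more cleanly. Rather than patching local $p$-variation estimates across a greedy partition and paying the $M^{p-1}$ refinement cost, the paper fixes a single control $\omega$ dominating both $\|\mathbf{x}^j\|_{p-\text{var}}^p$, works with $\kappa := \beta^{-1}|f^1-f^2|_{\text{Lip}^{\theta-1}} + \rho_{p-\omega;[S,T]}(\mathbf{x}^1,\mathbf{x}^2)$, and iterates the \emph{pointwise} recursion
\[
2^{1-p}|\bar y_{s,v}|^p \;\leq\; C\beta^p\omega(u,v)(|\bar y_u|+\kappa)^p e^{C\beta^p\omega(u,v)} + |\bar y_{s,u}|^p
\]
from \cite[Lemma 8]{BFRS13} along the greedy partition to obtain $|\bar y_{u,v}|^p \leq C\omega(u,v)(\|\bar y\|_{\infty;[s,t]}+\kappa)^p\exp\{C(N_\alpha(\omega;[s,t])+1)\}$ for arbitrary $[u,v]\subseteq[s,t]$. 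Because the right-hand side is proportional to the control, summing over any partition and using superadditivity of $\omega$ yields the $p$-variation bound with no extra factor; the sup-norm $\|\bar y\|_\infty$ is then replaced by $|\bar y_s|+\kappa$ via \cite[Lemma 8]{BFRS13}, and one concludes as in \cite[Theorem 4]{BFRS13}. So the structural difference is: you sum local $p$-variation norms and absorb a polynomial-in-$M$ cost into the exponential; the paper sums pointwise increments against a control and pays nothing. A minor remark: your constant $K_p$ is really $M^{p-1}$ (as your own derivation shows), not a constant depending only on $p$---you treat this correctly in the end, but the notation is slightly misleading.
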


\begin{proof}
 The proof follows \cite[Lemma 7 and Theorem 4]{BFRS16}. Let $\omega$ be a control function such that $\sup_{s<t} \frac{\|\mathbf{x}^j \|}{\omega(s,t)^{1/p}} \leq 1$ for $j = 1,2$. Set $\bar{y} := y^1 - y^2$ and
 \begin{align*}
  \kappa :=\frac{\left\vert \sigma^{1} - \sigma^{2}\right\vert _{\text{Lip}^{\theta -1}}}{%
\beta }+\rho _{p-\omega ;[S,T]}(\mathbf{x}^{1},\mathbf{x}^{2}).
 \end{align*}
 We claim that there is a constant $C = C(\theta,p)$ such that for every $s<t$,
 \begin{align}\label{eqn:claim_1_prop_loc_lip_p-var}
  \|\bar{y}\|_{p-\text{var};[s,t]} \leq C \beta \omega(s,t)^{\frac{1}{p}} \left(\|\bar{y}\|_{\infty;[s,t]} + \kappa \right) \exp \left\{ C \beta^p(N_{\alpha}(\omega;[s,t]) + 1) \right\}.
 \end{align}
 Indeed, as it was shown in the proof of \cite[Lemma 7]{BFRS16},
 \begin{align*}
  2^{1-p} |\bar{y}_{s,v}|^p \leq C \beta^p \omega(u,v) (|\bar{y}_u| + \kappa)^p  \exp \left\{ C \beta^p \omega(u,v) \right\} + |\bar{y}_{s,u}|^p
 \end{align*}
 for every $[u,v] \subseteq [s,t]$. Thus if $s = \tau_0 < \ldots < \tau_M < \tau_{M+1} = v$,
 \begin{align*}
  |\bar{y}_{s,v}|^p \leq 2^{(M+1)(p-1)} C \beta^p \omega(s,v) (\|\bar{y}\|_{\infty;[s,v]} + \kappa)^p  \exp \left\{ C \beta^p \sum_{i = 0}^M \omega(\tau_i,\tau_{i+1}) \right\}
 \end{align*}
 for every $s \leq v \leq t$. Choosing $\tau_0 = s$, $\tau_{i+1} = \inf_t\{\omega(\tau_i,t) \geq \alpha \} \wedge v $ gives 
  \begin{align*}
  |\bar{y}_{s,v}|^p \leq C \beta^p \omega(s,v) (\|\bar{y}\|_{\infty;[s,v]} + \kappa)^p  \exp \left\{ C \beta^p (N_{\alpha}(\omega;[s,v]) + 1) \right\}
 \end{align*}
  for every $s \leq v \leq t$ and \eqref{eqn:claim_1_prop_loc_lip_p-var} follows. Now we can use the conclusion from \cite[Lemma 7]{BFRS16} to see that
  \begin{align*}
   \|\bar{y}\|_{p-\text{var};[s,t]} \leq C \beta \omega(s,t)^{\frac{1}{p}} \left(\|\bar{y}_s\| + \kappa \right) \exp \left\{ C \beta^p(N_{\alpha}(\omega;[s,t]) + 1) \right\}
  \end{align*}
  holds for every $s<t$. We conclude as in \cite[Theorem 4]{BFRS16}.

\end{proof}

\begin{lemma}\label{lemma:estimates_diff_young_shift_inhom_rp_metric}
Let $\mathbf{x}$ be a weakly geometric $p$-rough path with $p \in [2,3)$ and $h$ a path of finite $q$-variation with $1 \leq q \leq p$ and $\frac{1}{p} + \frac{1}{q} > 1$. Then there is a constant $C = C(p,q)$ such that
  \begin{align*}
   \rho_{p-\text{var};[S,T]}(T_h(\mathbf{x}),\mathbf{x}) \leq C_{p,q}(1 \vee \|x\|_{p-\text{var};[S,T]})(\| h\|_{q-\text{var};[S,T]} + \| h\|_{q-\text{var};[S,T]}^2)
  \end{align*}
  where $T_h(\mathbf{x})$ denotes the translation of $\mathbf{x}$ by $h$, cf. \cite[Section 9.4.6]{FV10}.
\end{lemma}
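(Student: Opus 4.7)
The plan is to unfold the Young translation $T_h(\mathbf{x})$ componentwise and bound the level-$1$ and level-$2$ contributions to $\rho_{p-\text{var};[S,T]}$ separately. For $\mathbf{x}$ a weakly geometric $p$-rough path with $p\in[2,3)$ (so that $[p]=2$) and a path $h$ of finite $q$-variation, $T_h(\mathbf{x})_{s,t}$ differs from $\mathbf{x}_{s,t}$ by the increment $h_{s,t}$ at the first level and by the three cross Young integrals
\[ \Psi_{s,t} := \int_s^t h_{s,u}\otimes dx_u + \int_s^t x_{s,u}\otimes dh_u + \int_s^t h_{s,u}\otimes dh_u \]
at the second level. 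The inhomogeneous $p$-variation metric on $G^{2}(\R^d)$-valued paths is, up to a fixed constant, the maximum of a $p$-variation distance at level one and a $p/2$-variation distance at level two, so the task reduces to bounding these two quantities.

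For the first level, since $q\leq p$ the standard embedding $\|h\|_{p-\text{var};[S,T]}\leq \|h\|_{q-\text{var};[S,T]}$ (a consequence of super-additivity of $\omega_h^{p/q}$, see below) already delivers the required bound.

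For the second level, observe that the hypotheses $1/p+1/q>1$ and $p\geq 2$ together force $q<2$, so each of the three pieces of $\Psi_{s,t}$ is a bona fide Young integral. The Young--Lo\`eve inequality, applied twice to the mixed integrals and once to $\int h\,dh$, yields for every $[s,t]\subseteq[S,T]$
\[ |\Psi_{s,t}| \leq C_{p,q}\bigl( \|h\|_{q-\text{var};[s,t]}\|x\|_{p-\text{var};[s,t]} + \|h\|_{q-\text{var};[s,t]}^{2}\bigr). \]
To pass from this pointwise estimate to a $p/2$-variation bound, introduce the super-additive control functions $\omega_h(s,t)=\|h\|_{q-\text{var};[s,t]}^{q}$ and $\omega_x(s,t)=\|x\|_{p-\text{var};[s,t]}^{p}$. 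For any partition $\{t_j\}$ of $[S,T]$, raise the pointwise bound to the power $p/2$ and split the mixed term by the Cauchy--Schwarz inequality; the resulting sums are $\sum_j \omega_h(t_j,t_{j+1})^{p/q}$ and $\sum_j \omega_x(t_j,t_{j+1})$. Because $p/q\geq 1$, the function $\omega_h^{p/q}$ is still super-additive, so $\sum_j \omega_h(t_j,t_{j+1})^{p/q}\leq \omega_h(S,T)^{p/q}=\|h\|_{q-\text{var};[S,T]}^{p}$, while super-additivity of $\omega_x$ gives $\sum_j \omega_x(t_j,t_{j+1})\leq \|x\|_{p-\text{var};[S,T]}^{p}$. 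Taking $(2/p)$-th powers leads to
\[ \|\Psi\|_{p/2-\text{var};[S,T]} \leq C_{p,q}\bigl( \|h\|_{q-\text{var};[S,T]}\|x\|_{p-\text{var};[S,T]} + \|h\|_{q-\text{var};[S,T]}^{2}\bigr). \]

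Combining the level-$1$ and level-$2$ estimates and factoring out $(1\vee\|x\|_{p-\text{var};[S,T]})$ absorbs the linear $\|h\|$-term coming from the first level as well as the mixed $\|h\|\|x\|$-term from the second level, giving the claimed inequality. The main technical point is the third step: one has to choose the Cauchy--Schwarz exponents so that the residual sums are exactly the super-additive controls $\omega_x$ and $\omega_h^{p/q}$, and it is precisely here that the assumption $q\leq p$ (not merely $q<2$) is used in an essential way.
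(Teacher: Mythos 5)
Your proposal is correct and follows essentially the same route as the paper: both bound the level-one difference by $\|h\|_{q\text{-var}}$, write the level-two difference as cross Young integrals, apply the Young--Lo\`eve estimate on each subinterval, and then sum over partitions via a H\"older-type inequality together with super-additivity of the controls. Your expansion into three cross terms with Cauchy--Schwarz and super-additivity of $\omega_h^{p/q}$ is only a cosmetic variant of the paper's grouping $\int dh\otimes d(x+h)+\int dx\otimes dh$ followed by H\"older and the triangle inequality.
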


\begin{proof}
 Recall that
 \begin{align*}
  \rho_{p-\text{var};[S,T]}(\mathbf{x},\mathbf{y}) = \sup_{D \in \mathcal{P}([S,T])} \left( \sum_{t_i \in D} |x_{t_i,t_{i+1}} - y_{t_i,t_{i+1}}|^p \right)^{\frac{1}{p}} + \sup_{D \in \mathcal{P}([S,T])} \left( \sum_{t_i \in D} |\mathbf{x}^2_{t_i,t_{i+1}} - \mathbf{y}^2_{t_i,t_{i+1}}|^{p/2} \right)^{\frac{2}{p}}.
 \end{align*}
 Therefore, we immediately obtain
 \begin{align*}
  \rho_{p-\text{var};[S,T]}(T_h(\mathbf{x}),\mathbf{x}) \leq \| h\|_{q-\text{var};[S,T]} + \sup_{D \in \mathcal{P}([S,T])} \left( \sum_{t_i \in D} |T_h(\mathbf{x})^2_{t_i,t_{i+1}} - \mathbf{x}^2_{t_i,t_{i+1}}|^{p/2} \right)^{\frac{2}{p}}.
 \end{align*}
 Concerning the second term, fix some $D \in \mathcal{P}([S,T])$. We have
  \begin{align*}
   \sum_{t_i \in D} |T_h(\mathbf{x})^2_{t_i,t_{i+1}} - \mathbf{x}_{t_i,t_{i+1}}^2|^{p/2} = \sum_{t_i \in D} \left|\int_{\Delta_{{t_i},{t_{i+1}}}^2} d(x+h) \otimes d(x+h) - \int_{\Delta_{{t_i},{t_{i+1}}}^2} dx \otimes dx \right|^{p/2}
  \end{align*}
  and
  \begin{align*}
   &\left|\int_{\Delta_{{t_i},{t_{i+1}}}^2} d(x+h) \otimes d(x+h) - \int_{\Delta_{{t_i},{t_{i+1}}}^2} dx \otimes dx \right|\\ 
   \leq\ &\left| \int_{\Delta_{t_i,t_{i+1}}^2} dh \otimes d(x+h) \right| + \left| \int_{\Delta_{t_i,t_{i+1}}^2} dx \otimes dh \right| \\
   \leq\ &C_{p,q} \|h\|_{q-\text{var};[t_i,t_{i+1}]} \left( \|x+h\|_{p-\text{var};[t_i,t_{i+1}]} + \|x\|_{p-\text{var};[t_i,t_{i+1}]} \right)
  \end{align*}
 by the estimates for the Young integral. From H\"older's inequality,
 \begin{align*}
   \sum_{t_i \in D} |T_h(\mathbf{x})^2_{t_i,t_{i+1}} - \mathbf{x}_{t_i,t_{i+1}}^2|^{p/2} &\leq C_{p,q} \left( \sum_{t_i} \|h\|_{q-\text{var};[t_i,t_{i+1}]}^q \right)^{\frac{p}{2q}} \left( \sum_{t_i} \|x+h\|_{p-\text{var};[t_i,t_{i+1}]}^p + \|x\|_{p-\text{var};[t_i,t_{i+1}]}^p \right)^{\frac{1}{2}} \\
   &\leq C_{p,q} \|h\|_{q-\text{var};[S,T]}^{p/2}(\|x+h\|_{p-\text{var};[S,T]}^{p/2} + \|x\|_{p-\text{var};[S,T]}^{p/2} )
  \end{align*}
  and the result follows from the triangle inequality for the $p$-variation seminorm and standard estimates.
\end{proof}

\begin{lemma}\label{lemma:dist_ito_lyons_sup}
 Let $\mathbf{x}^1 := \mathbf{x}$ and $\mathbf{x}^2 := T_h(\mathbf{x})$ where $\mathbf{x}$ is a weakly geometric $p$-rough path for some $p \in [1,3)$ and $h$ is a path of finite $q$-variation with $\frac{1}{p} + \frac{1}{q} > 1$. Consider the solutions $y^1$ and $y^2$ to the RDEs as in Proposition \ref{prop:loc_lip_integrability_p-var_top} with $f^1 = f^2$ and $y^1_S = y^2_S$.
 %Assume that $f \in \operatorname{Lip}^{\beta}$ where\footnote{Lipschitz in the sense of E.~Stein, cf. \cite[Definition 10.2]{FV10}} $\beta > p$ and that $\theta$ is a bound on $|f|_{\text{Lip}^{\beta}}$. 
 Then
 \begin{align*}
  d_{p-\text{var};[S,T]}(y^1,y^2) \leq C \exp \{ C (N_1(\mathbf{x};[S,T]) + 1) \} (\|h\|_{q-\text{var};[S,T]} \vee \|h\|_{q-\text{var};[S,T]}^q)
 \end{align*}
 where $C$ is a constant depending on $p,q,\theta$ and $\beta$.
\end{lemma}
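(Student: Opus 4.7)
The plan is to combine Proposition~\ref{prop:loc_lip_integrability_p-var_top} applied to the pair $(\mathbf{x}^1,\mathbf{x}^2) = (\mathbf{x}, T_h(\mathbf{x}))$ with Lemma~\ref{lemma:estimates_diff_young_shift_inhom_rp_metric}. Since the two RDEs share the same vector field ($f^1 = f^2$) and the same initial condition ($y^1_S = y^2_S$), the Proposition (taken with $\alpha = 1$) simplifies to
\begin{align*}
 d_{p-\text{var};[S,T]}(y^1,y^2) \leq C\,\rho_{p-\text{var};[S,T]}(\mathbf{x}, T_h\mathbf{x})\bigl(\|\mathbf{x}\|_{p-\text{var}} + \|T_h\mathbf{x}\|_{p-\text{var}} + 1\bigr)\exp\bigl\{C(N_1(\mathbf{x};[S,T]) + N_1(T_h\mathbf{x};[S,T]) + 1)\bigr\},
\end{align*}
while Lemma~\ref{lemma:estimates_diff_young_shift_inhom_rp_metric} bounds the rough-path metric factor by $\rho_{p-\text{var}}(\mathbf{x}, T_h\mathbf{x}) \leq C(1 \vee \|\mathbf{x}\|_{p-\text{var}})(\|h\|_{q-\text{var}} + \|h\|_{q-\text{var}}^2)$.

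The remaining work is to re-express all quantities above solely in terms of $N_1(\mathbf{x};[S,T])$ and $\|h\|_{q-\text{var};[S,T]}$. The easy reduction uses the standard control-function inequality $\|\mathbf{z}\|_{p-\text{var};[S,T]}^p \leq C(N_1(\mathbf{z};[S,T]) + 1)$ to convert the polynomial factors in $\|\mathbf{x}\|_{p-\text{var}}$ (and, after a translation estimate, in $\|T_h\mathbf{x}\|_{p-\text{var}}$) into prefactors that are absorbed into $\exp\{CN_1(\mathbf{x})\}$. The delicate reduction is that of $N_1(T_h\mathbf{x};[S,T])$, which in general depends on $h$.

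To handle this, I would partition $[S,T]$ into sub-intervals $[\tau_i,\tau_{i+1}]$ chosen by super-additivity of the control $\omega_h(s,t) := \|h\|_{q-\text{var};[s,t]}^q$ so that $\omega_h(\tau_i,\tau_{i+1}) \leq 1$, with $M \leq \|h\|_{q-\text{var};[S,T]}^q + 1$ pieces. On each such sub-interval, $\|h\|_{q-\text{var};[\tau_i,\tau_{i+1}]} \leq 1$, so standard translation estimates give $N_1(T_h\mathbf{x};[\tau_i,\tau_{i+1}]) \leq C(N_1(\mathbf{x};[\tau_i,\tau_{i+1}]) + 1)$ and analogously for $\|T_h\mathbf{x}\|_{p-\text{var};[\tau_i,\tau_{i+1}]}$. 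Applying the displayed bound on each sub-interval (with all $N_1$'s and norms now involving only $\mathbf{x}$), and piecing the results together via sub-additivity $d_{p-\text{var};[S,T]}(y^1,y^2) \leq \sum_i d_{p-\text{var};[\tau_i,\tau_{i+1}]}(y^1,y^2)$, one propagates the initial-value discrepancy $|y^1_{\tau_i} - y^2_{\tau_i}|$ inductively across the $M$ sub-intervals.

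The main obstacle is to combine the per-sub-interval bounds without generating an unwanted $\exp\{CM\}$ factor (which would be exponential, rather than polynomial, in $\|h\|_{q-\text{var}}^q$). This requires careful bookkeeping: one exploits super-additivity $\sum_i N_1(\mathbf{x};[\tau_i,\tau_{i+1}]) \leq N_1(\mathbf{x};[S,T]) + M$ to collapse the sum of per-piece exponentials into a single $\exp\{C(N_1(\mathbf{x};[S,T])+1)\}$ (up to a multiplicative overhead linear in $M$), and absorbs that overhead into the polynomial prefactor $\max_i \|h\|_{q-\text{var};[\tau_i,\tau_{i+1}]} \leq \|h\|_{q-\text{var}}$. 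The resulting $M \cdot \|h\|_{q-\text{var}}$ contribution, with $M \lesssim \|h\|_{q-\text{var}}^q + 1$, is precisely the mixture $\|h\|_{q-\text{var}} \vee \|h\|_{q-\text{var}}^q$ appearing in the conclusion.
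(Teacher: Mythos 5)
There is a genuine gap in your treatment of large $h$. Your small-$h$ reasoning (Proposition \ref{prop:loc_lip_integrability_p-var_top} with $f^1=f^2$, $y^1_S=y^2_S$, combined with Lemma \ref{lemma:estimates_diff_young_shift_inhom_rp_metric}, translation estimates for $\|T_h\mathbf{x}\|_{p-\text{var}}$ and $N_1(T_h\mathbf{x})$, and $\|\mathbf{x}\|_{p-\text{var}}\leq N_1(\mathbf{x};[S,T])+1$) is exactly what the paper does. But the partition-and-paste step cannot deliver the claimed bound: when you apply the local Lipschitz estimate on $[\tau_i,\tau_{i+1}]$ you must feed in the accumulated discrepancy $|y^1_{\tau_i}-y^2_{\tau_i}|$, and the estimate multiplies it by $\exp\{C(N_1(\mathbf{x};[\tau_i,\tau_{i+1}])+N_1(T_h\mathbf{x};[\tau_i,\tau_{i+1}])+1)\}\geq e^{C}$. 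Iterating over $M$ pieces therefore produces a \emph{product} of the per-piece exponentials (a discrete Gronwall), so even granting your super-additivity bookkeeping the overhead is at least $e^{CM}\simeq \exp\{C\|h\|_{q-\text{var}}^q\}$, not ``linear in $M$''. The resulting estimate is exponential in $\|h\|_{q-\text{var}}$, strictly weaker than the asserted $\|h\|_{q-\text{var}}\vee\|h\|_{q-\text{var}}^q$, and it would also be useless downstream: the point of the lemma is that the exponential factor depends only on $\mathbf{x}$ while the $h$-dependence is polynomial, which is what makes $L$ in Proposition \ref{cor:loc_lip_mult_noise} integrable of all orders and Theorem \ref{thm:main_result_multiplicative_sdes} work.

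The paper's proof avoids this by a case distinction instead of a partition. If $\|h\|_{q-\text{var};[S,T]}\leq 1$, it runs exactly your first display on the whole interval, using Lemma \ref{lemma:growth_random_meas_cm_shift} and \cite[Theorem 9.33]{FV10} to replace $N_\alpha(T_h\mathbf{x})$ and $\|T_h\mathbf{x}\|_{p-\text{var}}$ by quantities in $\mathbf{x}$ alone, then Lemma \ref{lemma:estimates_diff_young_shift_inhom_rp_metric} and $\|\mathbf{x}\|_{p-\text{var}}\leq N_1(\mathbf{x};[S,T])+1$, yielding $C\exp\{C(N_1(\mathbf{x};[S,T])+1)\}\|h\|_{q-\text{var}}$. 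If $\|h\|_{q-\text{var};[S,T]}\geq 1$, it abandons the difference estimate entirely and uses the crude bound
\begin{align*}
 d_{p-\text{var};[S,T]}(y^1,y^2)\ \leq\ \|y^1\|_{p-\text{var};[S,T]}+\|y^2\|_{p-\text{var};[S,T]}\ \leq\ C\bigl(N_{\alpha}(\mathbf{x};[S,T])+N_{\alpha}(T_h(\mathbf{x});[S,T])+1\bigr),
\end{align*}
which follows from the deterministic It\=o--Lyons estimates of \cite{FR12b} and is only \emph{linear} in the $N$-functionals; Lemma \ref{lemma:growth_random_meas_cm_shift} then bounds $N_{\alpha}(T_h(\mathbf{x});[S,T])$ by $C(N_1(\mathbf{x};[S,T])+1)+\|h\|_{q-\text{var}}^q$, giving $C(N_1(\mathbf{x};[S,T])+1)\|h\|_{q-\text{var}}^q$ in this regime. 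That is the idea your proposal is missing: for large $h$ one should not compare the two solutions at all, but bound each separately and let the allowed $\|h\|^q$ growth absorb the translation.
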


\begin{proof}
 We will only consider the case $p \in [2,3)$, the case $p \in [1,2)$ is similar (and easier). Let $\|h\|_{q-\text{var};[S,T]} \leq 1$. We claim that
 \begin{align}\label{eqn:claim_1_small_h}
 	 d_{p-\text{var};[S,T]}(y^1,y^2) \leq C \exp\left\{C(N_{1}(\mathbf{x};[S,T]) + 1)\right\} \|h\|_{q-\text{var};[S,T]}
 \end{align} 
 holds for some constant $C$. Indeed: From Proposition \ref{prop:loc_lip_integrability_p-var_top} we know that for every $\alpha > 0$,
 \begin{align*}
    d_{p-\text{var};[S,T]}(y^1,y^2) \leq &C ( \|\mathbf{x} \|_{p-\text{var};[S,T]} + \| T_h(\mathbf{x}) \|_{p-\text{var};[S,T]} + 1 )\\
    &\times \exp\left\{C(N_{\alpha}(\mathbf{x};[S,T]) + N_{\alpha}(T_h(\mathbf{x});[S,T]) + 1)\right\} \rho_{p-\text{var};[S,T]}(\mathbf{x},T_h(\mathbf{x})).
 \end{align*}
  Using \cite[Theorem 9.33]{FV10}, \cite[Lemma 5]{BFRS16}, \cite[Lemma 1]{FR12b} and the assumption $\|h\|_{q-\text{var};[S,T]} \leq 1$ shows that
 \begin{align*}
    d_{p-\text{var};[S,T]}(y^1,y^2) \leq C ( \|\mathbf{x} \|_{p-\text{var};[S,T]} + 1 )\exp\left\{C(N_{1}(\mathbf{x};[S,T]) + 1)\right\} \rho_{p-\text{var};[S,T]}(\mathbf{x},T_h(\mathbf{x}))
 \end{align*}
 for a larger constant $C$ and $\alpha$ chosen appropriately. Applying Lemma \ref{lemma:estimates_diff_young_shift_inhom_rp_metric} shows \eqref{eqn:claim_1_small_h}, using the estimate $\|\mathbf{x}\|_{p-\text{var};[S,T]} \leq N_1(\mathbf{x};[S,T]) + 1$ which was proven in \cite[Lemma 4]{FR12b}.
 
 Now let $\|h\|_{q-\text{var};[S,T]} \geq 1$. In this case,
 \begin{align*}
  d_{p-\text{var};[S,T]}(y^1,y^2) &\leq \|y^1\|_{p-\text{var};[S,T]} + \|y^2\|_{p-\text{var};[S,T]} \\
  &\leq C(N_{\alpha}(\mathbf{x};[S,T]) + N_{\alpha}(T_h(\mathbf{x});[S,T]) + 1)
 \end{align*}
 using the deterministic estimates for the It\=o--Lyons map proven in \cite{FR12b}. With \cite[Lemma 11.12]{FH14}, we conclude that
 \begin{align*}
 	d_{p-\text{var};[S,T]}(y^1,y^2) \leq C(N_{1}(\mathbf{x};[S,T])+1) \|h\|_{q-\text{var};[S,T]}^q
 \end{align*}
 for a larger constant $C$.

\end{proof}

We come back to our original setup. Assume that $\gamma$ is a Gaussian measure on the Borel sets of the Banach space $C_0([0,T],\R^m)$ induced by a continuous $\R^m$-valued Gaussian process $X$. As usual, we denote the corresponding Cameron-Martin space by $\mathcal{H}$. Assume that there is some $p \in [1,3)$ and a measurable lift map $S \colon C_0 \to \mathcal{D}_g^{0,p}$ such that the diagram \eqref{eqn:comm_diag} commutes on a set of full $\gamma$-measure. Will now make further assumptions on our lift map $S$: Suppose that
\begin{itemize}
 \item[(i)] There is a continuous embedding
\begin{align*}
 \iota \colon \mathcal{H} \hookrightarrow C^{q-\text{var}}([0,T],\R^d);\quad 1\leq q \leq p
\end{align*}
with $\frac{1}{p} + \frac{1}{q} > 1$ (note that this implies $1 \leq q < 2$ when $p \geq 2$).
  \item[(ii)] The set
  \begin{align*}
   \left\{ x \in C_0 \ |\ S(x + h) = T_h(S(x)) \text{ for all } h \in \mathcal{H} \right\} % =: \tilde{C}_0
  \end{align*}
  has full $\gamma$-measure.

\end{itemize}

\begin{remark}
 Assumption (i) and (ii) are trivially satisfied for $p \in [1,2)$. More generally, they hold if the covariance of the corresponding $\R^m$-valued Gaussian process has \emph{mixed $(1,\rho)$-variation} for some $\rho \in [1,3/2)$ with $q = 2(1/\rho + 1)^{-1}$, cf. \cite[p. 688]{FGGR16} for the definition of mixed variation and \cite[Theorem 1.1]{FGGR16} and \cite[Lemma 15.58]{FV10} for the corresponding results. A list of processes which satisfy this condition can also be found in \cite{FGGR16}. In particular, they hold for the Stratonovich lift of the Brownian motion with $q = 1$.
\end{remark}

Under these two conditions, the following Proposition is an immediate consequence of Lemma \ref{lemma:dist_ito_lyons_sup}.

\begin{proposition}\label{cor:loc_lip_mult_noise}
 Consider the RDEs as in Proposition \ref{prop:loc_lip_integrability_p-var_top}.
 % with $p \in [1,3)$, $f^1 = f^2$, $y^1_0 = y^2_0$.
 % and assume that $S \colon C_0 \to \mathcal{D}^{0,p}_g$ is a lift map such that the diagram \eqref{eqn:comm_diag} commutes. 
 
 Then
 \begin{align*}
    d_{p-\text{var}}(y^1,y^2) \leq L(x^1) (\|\iota\|_{\mathcal{H} \hookrightarrow C^{q-\text{var}}} \vee \|\iota\|_{\mathcal{H} \hookrightarrow C^{q-\text{var}}}^q) (d_{\mathcal{H}}(x^1,x^2) \vee d_{\mathcal{H}}(x^1,x^2)^q)
 \end{align*}
  for all $x^1,x^2 \in C_0$ where
  \begin{align*}
   L(x) = C \exp \big\{ C (N_1(S(x);[0,T]) + 1) \big\}
  \end{align*}
  and $C$ is a constant depending on $p,q,\theta$ and $\beta$.
\end{proposition}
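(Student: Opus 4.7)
The statement is essentially a packaging of Lemma \ref{lemma:dist_ito_lyons_sup} once the two standing assumptions (i) and (ii) on the lift $S$ and on $\mathcal{H}$ are used to translate a Cameron--Martin shift of $x^1$ into a Young translation of the rough path $S(x^1)$. The plan is therefore to reduce the general statement to a single application of that lemma, with $h := x^2 - x^1$ and $\mathbf{x} := S(x^1)$.

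First I would dispose of the degenerate case: if $d_{\mathcal{H}}(x^1,x^2) = +\infty$, the right--hand side is $+\infty$ and there is nothing to prove; we may furthermore restrict attention to $x^1$ in the full-measure set $\tilde{C}_0$ of assumption (ii), since the claim is stated $\gamma$-a.s. (through the random variable $L$). So assume $h := x^2 - x^1 \in \mathcal{H}$ and $x^1 \in \tilde{C}_0$. By assumption (i), $h \in C^{q-\text{var}}([0,T],\R^d)$ with
\begin{equation*}
\|h\|_{q-\text{var}} \leq \|\iota\|_{\mathcal{H} \hookrightarrow C^{q-\text{var}}} \, |h|_{\mathcal{H}} = \|\iota\|_{\mathcal{H} \hookrightarrow C^{q-\text{var}}} \, d_{\mathcal{H}}(x^1,x^2),
\end{equation*}
and the exponent pair $(p,q)$ satisfies $1/p + 1/q > 1$, so $h$ is admissible as a Young perturbation of a weakly geometric $p$-rough path.

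Next, by assumption (ii), the lift commutes with the shift on $\tilde{C}_0$, so
\begin{equation*}
S(x^2) = S(x^1 + h) = T_h(S(x^1)) = T_h(\mathbf{x}).
\end{equation*}
Since the diagram \eqref{eqn:comm_diag} commutes, we have $y^j = \mathbf{I}_f(S(x^j),\xi)$ for $j=1,2$; these are precisely the RDE solutions driven by $\mathbf{x}^1 = \mathbf{x}$ and $\mathbf{x}^2 = T_h(\mathbf{x})$ with common starting point and common vector fields, so Lemma \ref{lemma:dist_ito_lyons_sup} applies directly. It yields
\begin{equation*}
d_{p-\text{var}}(y^1,y^2) \leq C \exp\{C(N_1(S(x^1);[0,T]) + 1)\} \bigl(\|h\|_{q-\text{var}} \vee \|h\|_{q-\text{var}}^q \bigr).
\end{equation*}

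Finally, inserting the bound on $\|h\|_{q-\text{var}}$ in terms of $d_{\mathcal{H}}(x^1,x^2)$ and using the elementary inequality $(ab) \vee (ab)^q \leq (a \vee a^q)(b \vee b^q)$ for $a,b \geq 0$ (with $a = \|\iota\|$ and $b = d_{\mathcal{H}}(x^1,x^2)$) produces exactly the claimed estimate with $L(x^1) = C\exp\{C(N_1(S(x^1);[0,T]) + 1)\}$. Since each step is either a definition chase or a direct quote of a result already established, there is no genuine obstacle here; the only mild bookkeeping point is handling the $\max$ with the $q$-th power cleanly so that the separate $\|h\|$ and $\|h\|^q$ regimes in Lemma \ref{lemma:dist_ito_lyons_sup} merge into the single factor $\|\iota\| \vee \|\iota\|^q$ times $d_{\mathcal{H}} \vee d_{\mathcal{H}}^q$ displayed in the statement.
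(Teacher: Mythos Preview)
Your proposal is correct and follows exactly the approach the paper intends: the paper states that the proposition is ``an immediate consequence of Lemma \ref{lemma:dist_ito_lyons_sup}'' under assumptions (i) and (ii), and you have supplied precisely the routine unpacking --- identifying $h = x^2 - x^1$, using (ii) to write $S(x^2) = T_h(S(x^1))$, using (i) to bound $\|h\|_{q-\text{var}}$ by $\|\iota\|\,d_{\mathcal{H}}(x^1,x^2)$, and then splitting the $\vee$ via $(ab)\vee(ab)^q \le (a\vee a^q)(b\vee b^q)$. Your handling of the null-set issue (restricting to $\tilde C_0$) is also appropriate, matching the ``$\gamma$-almost surely'' formulation given in the introduction.
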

The next theorem is our main result for the multiplicative case.

\begin{theorem}\label{thm:main_result_multiplicative_sdes}
 Let $Y$ be the solution to the SDE \eqref{eqn:mult_case_gaussian_sde} driven by the Gaussian process $X$ defined pathwise via the diagram \eqref{eqn:comm_diag} and let $\mu$ be the law of $Y$. Assume that $q = 1$. Then for every $\varepsilon > 0$ there is a constant $C$ depending on $\varepsilon$, $p$, $\theta$ and $\beta$ such that for every $\nu \in P(C_{\xi})$,
 \begin{align*}
   \inf_{\pi \in \Pi(\nu,\mu)} \left(\int_{C_{\xi} \times C_{\xi}} d_{p-\text{var}}(x,y)^{2 - \varepsilon}\, d\pi(x,y)\right)^{\frac{1}{2 - \varepsilon}} \leq C \|\iota\|_{\mathcal{H} \hookrightarrow C^{1-\text{var}}} \sqrt{H(\nu\,|\,\mu)}.
 \end{align*}
\end{theorem}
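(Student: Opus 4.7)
The plan is to push the $T_2$ bound on the Gaussian rough path space provided by Corollary \ref{corollary:talagrand_gaussian_rp_space} down to the law $\mu$ on $C_{\xi}$ along the It\=o--Lyons map $\mathbf{I}_f(\cdot,\xi)$, and then to absorb the random Lipschitz constant from Proposition \ref{cor:loc_lip_mult_noise} by H\"older's inequality; this is what forces the small loss $\varepsilon>0$ in the Wasserstein exponent. In spirit, this is a direct generalisation of \cite[Lemma 2.1]{DGW04} from bounded to $L^q$-Lipschitz constants.

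First, I would reduce to $\nu \ll \mu$ (otherwise the right-hand side is $+\infty$). Define $\boldsymbol{\nu} \in P(\mathcal{D}^{0,p}_g)$ by $d\boldsymbol{\nu}/d\boldsymbol{\gamma} := (d\nu/d\mu) \circ \mathbf{I}_f(\cdot,\xi)$. Using the commutativity of diagram \eqref{eqn:comm_diag} and a direct computation with the $\log$, one checks $\boldsymbol{\nu} \circ \mathbf{I}_f(\cdot,\xi)^{-1} = \nu$ and $H(\boldsymbol{\nu}\,|\,\boldsymbol{\gamma}) = H(\nu\,|\,\mu)$. Now apply Corollary \ref{corollary:talagrand_gaussian_rp_space} to obtain a coupling $\boldsymbol{\pi} \in \Pi(\boldsymbol{\nu},\boldsymbol{\gamma})$ that almost realises the infimum, so that
\begin{align*}
\int d_{\mathcal{H}}(\mathbf{x}^1,\mathbf{x}^2)^2 \, d\boldsymbol{\pi}(\mathbf{x}^1,\mathbf{x}^2) \;\leq\; 2\, H(\nu\,|\,\mu),
\end{align*}
and push it forward to $\pi := \bigl(\mathbf{I}_f(\cdot,\xi) \otimes \mathbf{I}_f(\cdot,\xi)\bigr)_* \boldsymbol{\pi} \in \Pi(\nu,\mu)$.

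By Proposition \ref{cor:loc_lip_mult_noise} with $q=1$, and by the symmetry of the underlying RDE set-up in $\mathbf{x}^1,\mathbf{x}^2$ (which allows one to evaluate the random constant on either side), one has $\boldsymbol{\pi}$-almost surely
\begin{align*}
d_{p-\text{var}}\bigl(\mathbf{I}_f(\mathbf{x}^1,\xi),\mathbf{I}_f(\mathbf{x}^2,\xi)\bigr) \;\leq\; L(\mathbf{x}^2)\,\|\iota\|_{\mathcal{H} \hookrightarrow C^{1-\text{var}}}\, d_{\mathcal{H}}(\mathbf{x}^1,\mathbf{x}^2),
\end{align*}
where crucially $L$ is evaluated at the $\boldsymbol{\gamma}$-marginal. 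Setting $r := 2(2-\varepsilon)/\varepsilon$, H\"older's inequality with conjugate exponents $2/\varepsilon$ and $2/(2-\varepsilon)$ then gives
\begin{align*}
\int d_{p-\text{var}}^{2-\varepsilon}\, d\pi \;\leq\; \|\iota\|^{2-\varepsilon} \Bigl(\int L^{r}\, d\boldsymbol{\gamma}\Bigr)^{\varepsilon/2}\Bigl(\int d_{\mathcal{H}}^2\, d\boldsymbol{\pi}\Bigr)^{(2-\varepsilon)/2}.
\end{align*}
Taking $(2-\varepsilon)$-th roots and substituting the entropy bound in the last factor yields the claim with
\begin{align*}
C \;=\; \sqrt{2}\,\Bigl(\int L^r\, d\boldsymbol{\gamma}\Bigr)^{\varepsilon/(2(2-\varepsilon))}.
\end{align*}

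The main obstacle is to verify $\int L^r d\boldsymbol{\gamma} < \infty$ for every $r < \infty$. Since $L = C\exp\{C(N_1(S(\cdot);[0,T]) + 1)\}$, this reduces to showing that $N_1(S(\cdot);[0,T])$ has Gaussian tails under $\gamma$, which is precisely the Cass--Litterer--Lyons integrability theorem (\cite{CLL13}); for the Brownian-type Stratonovich lift (the case $q=1$ covered by the theorem) this is well-established. A small but essential care-point is the symmetrisation noted above: Proposition \ref{cor:loc_lip_mult_noise} is written with $L(\mathbf{x}^1)$, but to keep the constant independent of $\nu$ one must evaluate $L$ at the Gaussian marginal, which is possible because the estimate proved in Lemma \ref{lemma:dist_ito_lyons_sup} is symmetric in $\mathbf{x}^1, \mathbf{x}^2$ (both paths play the role of a Cameron--Martin translate of the other, and $N_1$ is comparable across such a translate by Lemma \ref{lemma:growth_random_meas_cm_shift}).
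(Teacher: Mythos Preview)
Your proof is correct and follows essentially the same route as the paper: the paper invokes Theorem \ref{thm:talagrand_strong_form_linear} on $C_0$ (rather than its rough-path corollary), then combines Proposition \ref{cor:loc_lip_mult_noise} with the abstract push-forward Lemma \ref{lemma:transp_ineq_trans_under_loc_lip_maps}, which packages exactly the H\"older step you carry out by hand, together with the CLL tail estimate for $N_1$. Your explicit remark on the symmetrisation (evaluating $L$ at the $\gamma$-marginal) is precisely what is needed to match the hypothesis $\tilde c(\Psi(x),\Psi(y))\leq L(y)c(x,y)$ of Lemma \ref{lemma:transp_ineq_trans_under_loc_lip_maps}; the paper's short proof leaves this implicit.
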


\begin{proof}
  From \cite[Theorem 6.3]{CLL13} we know that $N_1(S;[0,T])$ has Gaussian tails w.r.t. $\gamma$, hence $\| \exp \{ C (N_1(S;[0,T])+1) \}\|_{L^q(\gamma)} < \infty$ for every $q \in [1,\infty)$.  The assertion follows from Theorem \ref{thm:talagrand_strong_form_linear}, Proposition \ref{cor:loc_lip_mult_noise} and the contraction principle Lemma \ref{lemma:transp_ineq_trans_under_loc_lip_maps}.

\end{proof}

\begin{example}
 Let us come back to the bifractional Brownian motion $B^{H,K} \colon [0,T] \to \R^m$ already considered in Example \ref{ex:bifBm}. In \cite{RT06} and \cite{KRT07}, it was shown that in the case $2HK = 1$, the process has many similarities to the usual Brownian motion. The same holds true here: From \cite[Example 2.12]{FGGR16}, we know that the covariance of the bifractional Brownian motion has mixed $(1,\rho)$-variation for $\rho = (2HK)^{-1}$. In particular, we can choose $q = 1$ in the case $2HK = 1$, and Theorem \ref{thm:main_result_multiplicative_sdes} applies. Therefore, we can \emph{almost} (i.e. modulo an $\varepsilon$-correction) deduce Talagrand's transport inequality in this case. However, for the Brownian motion (which we obtain for the choice $K = 1$ and $H = 1/2$), it is known that Talagrand's inequality holds for the uniform distance (which is smaller than the $p$-variation distance) even without $\varepsilon$-correction, cf. \cite{Ust12}. It remains an open problem how to obtain the full $2$-
transport inequality for diffusions driven by a multidimensional Brownian motion without using the Girsanov transformation.  
\end{example}

\section{Tail estimates for functionals}\label{sec:tail_estimates_for_functionals}

In the following, we aim to motivate why it is useful to have $p$-transportation--cost inequalities for $p > 1$. This section is independent of the former one and may be interesting in its own right.

It is well known that transportation--cost inequalities imply Gaussian measure concentration. This was first disovered by Marton (\cite{Mar86}, \cite{Mar96}). In \cite{DGW04}, it was shown that for $p = 1$, the converse is true: Gaussian tails imply the $1$-transportation--cost inequality. In the case of a Gaussian Banach space $(E,\mathcal{H},\gamma)$, it is a classical result (cf. \cite[4.5.6. Theorem]{Bog98}) that $\mathcal{H}$-Lipschitz functions on Gaussian spaces have Gaussian tails. This result was further generalized in \cite[Theorem 17]{DOR15} (cf. also \cite{FO10} and \cite[Theorem 11.7]{FH14}) where it was shown that the linear growth of a function in $\mathcal{H}$-direction already implies that it has Gaussian tails. More precisely, if there is a constant $\sigma > 0$ and a measurable map $g \colon E \to [0,\infty]$ for which $g < \infty$ on a set of positive $\gamma$-measure such that $f \colon E \to [0,\infty]$ satisfies
\begin{align}\label{eqn:linear_growth_H_direction}
 f(x + h) \leq g(x) + \sigma | h |_{\mathcal{H}}
\end{align}
for all $x$ on a set of full $\gamma$-measure and all $h \in \mathcal{H}$, then $f$ has Gaussian tails. In the following, we will prove an abstract result which will imply that we may even choose $\sigma$ random in \eqref{eqn:linear_growth_H_direction} and still obtain Gaussian tails for $f$.

% In the following, we show how to modify her argument in order to deduce tail estimates in a more general context.

\begin{theorem}\label{theorem:tail_estimates_general_form}
 Let $E$ be a linear Polish space and let $\mu$ be a probability measure defined on its Borel $\sigma$--algebra. Assume that there is a normed subspace $\mathcal{U} \subseteq E$ and let $d_{\mathcal{U}} \colon E \times E \to [0,\infty]$ be defined as
 \begin{align*}
  d_{\mathcal{U}}(x,y) = \begin{cases}
                          | x - y |_{\mathcal{U}} &\text{ if } x - y \in \mathcal{U} \\
                          + \infty &\text{ otherwise.}
                         \end{cases}
 \end{align*}
Assume that there is a $p \in [1, \infty)$ and a constant $C$ such that for every $\nu \in P(E)$,
\begin{align*}
 \inf_{\pi \in \Pi(\nu,\mu)} \left(\int_{E \times E} d_{\mathcal{U}}(x,y)^{p}\, d\pi(x,y)\right)^{\frac{1}{p}} \leq  \sqrt{ C H(\nu\,|\,\mu)}.
\end{align*}
  Let $(F,d)$ be some metric space and let $f \colon E \to F$ be measurable w.r.t. the Borel $\sigma$--algebra. Choose $r_0 \geq 0$ and some element $e \in E$ such that 
  \begin{align*}
   \mu\left\{ x \in E\, : \, d(f(x),e) \leq r_0 \right\} =: a > 0.
  \end{align*}
  Assume that there are measurable functions $g, \sigma \colon E \to [0,\infty]$ such that
  \begin{align*}
   d(f(x + h),e) \leq g(x) + \sigma(x)|h|_{\mathcal{U}}
  \end{align*}
  holds for every $x \in E$ and every $h \in \mathcal{U}$, and assume that $g \in L^1(\mu)$ and $\sigma \in L^q(\mu)$ where $q \in (1,\infty]$ is chosen such that $\frac{1}{q} + \frac{1}{q} = 1$. 
  
  Then 
  \begin{align*}
   \mu \left\{ x \in E \, :\, d(f(x),e) > r \right\} \leq \exp \left\{ - \frac{(r - r_1)^2}{C \|\sigma\|_{L^q(\mu)}^2} \right\}
  \end{align*}
  for all $r \geq r_1$ where $r_1 = r_0 + 4\|g\|_{L^1(\mu)} +  \|\sigma\|_{L^q(\mu)} \sqrt{2C \log(a^{-1})}$. In particular, the random variable $d(f(\cdot),e) \colon E \to [0,\infty)$ has Gaussian tails.

\end{theorem}

\begin{proof}
 For $x, y \in E$ set
 \begin{align*}
  d_f(x,y) = d(f(x),f(y)).
 \end{align*}
 For any measurable set $A \subseteq E$ and $r \geq 0$ we define
 \begin{align*}
  A^r := \left\{ x \in E\, :\, \text{there is an }\bar{x} \in A\text{ such that } d_f(x,\bar{x}) \leq r \right\}.
 \end{align*}
 Fix some $r\geq 0$ and set $B := (A^r)^c$. Assume first that $A$ and $B$ have positive measure. On $E$, we define the measures
 \begin{align*}
  d\mu_A := \frac{\mathbbm{1}_A}{\mu(A)}d\mu \quad \text{and} \quad d\mu_B := \frac{\mathbbm{1}_B}{\mu(B)}d\mu.
 \end{align*}
 Then
 \begin{align*}
  r &\leq \inf_{\pi \in \Pi(\mu_A,\mu_B)} \int_{E \times E} d_f(x,y) \, d\pi(x,y) \\
  &\leq \inf_{\pi \in \Pi(\mu_A,\mu)} \int_{E \times E} d_f(x,y) \, d\pi(x,y) + \inf_{\pi \in \Pi(\mu_B,\mu)} \int_{E \times E} d_f(x,y) \, d\pi(x,y)
 \end{align*}
 where we used symmetry and the triangle inequality for optimal transportation costs. (The triangle inequality can be deduced, in our case, exactly as for the usual Wasserstein metric using the Gluing Lemma \cite[Lemma 7.6]{Vil03} and the triangle inequality for $d_f$, cf. \cite[Theorem 7.3]{Vil03}.) If $x - y = h \in \mathcal{U}$, we obtain by assumption
%  \begin{align}\label{eqn:df_H_lipschitz}
%   d_f(x,y) = d(f(y+h), f(y)) \leq  c(y)\|h\|_{\mathcal{B}}
%  \end{align}
%  in case (i) and
 \begin{align*}
  d_f(x,y) \leq 2 g(y) + \sigma(y) |h|_{\mathcal{U}}.
 \end{align*} 
  This implies that for all $x,y \in E$,
  \begin{align*}
   d_f(x,y) \leq 2 g(y) + \sigma(y) d_{\mathcal{U}}(x,y).
  \end{align*}
  It follows that
  \begin{align*}
   r &\leq \inf_{\pi \in \Pi(\mu_A,\mu)} \int_{E \times E} \sigma(y) d_{\mathcal{U}}(x,y) \, d\pi(x,y) + \inf_{\pi \in \Pi(\mu_B,\mu)} \int_{E \times E} \sigma(y) d_{\mathcal{U}}(x,y) \, d\pi(x,y) + 4 \| g \|_{L^1} \\
   &\leq \|\sigma\|_{L^q} \left( \inf_{\pi \in \Pi(\mu_A,\mu)} \left( \int_{E \times E} d_{\mathcal{U}}(x,y)^p \, d\pi(x,y) \right)^{\frac{1}{p}} + \inf_{\pi \in \Pi(\mu_B,\mu)} \left( \int_{E \times E} d_{\mathcal{U}}(x,y)^p \, d\pi(x,y) \right)^{\frac{1}{p}}\right) +  4 \| g \|_{L^1} \\
   &\leq  \|\sigma\|_{L^q} \left( \sqrt{C H(\mu_A\,|\,\mu)} +  \sqrt{C H(\mu_B\,|\,\mu)} \right) + 4 \| g \|_{L^1} \\
   &= \|\sigma\|_{L^q} \left( \sqrt{C \log(\mu(A)^{-1})} + \sqrt{C \log(\mu(B)^{-1})} \right) + 4 \| g \|_{L^1}.
  \end{align*}
  Rearranging terms, we see that
  \begin{align*}
    \mu(A^r) \geq 1 - \exp\left\{ - \frac{(r - \hat{r})^2}{C \| \sigma \|_{L^q}^2} \right\} 
  \end{align*}
  for every $r \geq \hat{r}$ where $\hat{r} :=  \|\sigma\|_{L^q}\sqrt{C \log(\mu(A)^{-1})} + 4 \| g \|_{L^1}$. Now set
  \begin{align*}
   A := \{ x\in E\, :\, d(f(x),e) \leq r_0 \}.
  \end{align*}
  By assumption, $\mu(A) = a > 0$. For every $r \geq 0$, we have
  \begin{align*}
   A^r \subseteq \{ x \in E \, :\, d(f(x),e) \leq r_0 + r\}.
  \end{align*}
  If $\mu(B) = 0$, it follows that $\{ x \in E \, :\, d(f(x),e) \leq r_0 + r\}$ has full measure. In other words, $d(f(\cdot),e)$ is bounded almost surely and the claimed estimate is trivial. If $\mu(B) > 0$, we can use our calculations above to conclude that
  \begin{align*}
   1 - \exp\left\{ - \frac{(r - \hat{r})^2}{C \|\sigma\|_{L^q}^2}  \right\} \leq \mu\{ x \in E\, :\, d(f(x),e) \leq r_0 + r\}
  \end{align*}
  holds for every $r \geq \hat{r}$ and the claim follows.

\end{proof}

In the Gaussian case, Theorem \ref{thm:talagrand_strong_form_linear} immediately implies

\begin{corollary}\label{cor:gen_fernique}
 Let $(F, \mathcal{H},\gamma)$ be a Gaussian Fr\'echet space and $f \colon F \to [0,\infty]$ be measurable. Assume that there are nonnegative random variables $g \in L^1(\gamma)$ and $\sigma \in L^2(\gamma)$ such that
 \begin{align*}
  f(x + h) \leq g(x) + \sigma(x)|h|_{\mathcal{H}}
 \end{align*}
 holds for every $x \in F$ and $h \in \mathcal{H}$.
 
 Then $f$ has Gaussian tails.

\end{corollary}

\begin{remark}
 Corollary \ref{cor:gen_fernique} is more universal than the \emph{Generalized Fernique Theorem} proven in \cite[Theorem 17]{DOR15} and \cite[Theorem 11.7]{FH14} (cf. also \cite{FO10}) since it allows $\sigma$ to be an $L^2(\gamma)$-random variable. Moreover, our proof does not rely on the Borell-Sudakov-Cirelson inequality (\cite{Bor75}, \cite{SC74}) and can be applied in more general frameworks whenenver transport inequalities are available. Non-Gaussian examples include the law of diffusions driven by Gaussian processes, as was shown in this work.
\end{remark}

\section{Appendix}

\subsection{A generalized contraction principle}

The next Lemma is a generalization of \cite[Lemma 2.1]{DGW04}.

\begin{lemma}\label{lemma:transp_ineq_trans_under_loc_lip_maps}
	Let $(X,\mathcal{F})$ be a measurable space on which regular conditional distributions exist and let $c \colon X \times X \to \R_+ \cup\{+ \infty\}$ be a measurable function. Assume that there is a measure $\mu \in P(X)$ such that
	\begin{align*}
		\inf_{\pi \in \Pi(\nu,\mu)} \left( \int_{X \times X} c(x,y)^p\, d\pi(x,y) \right)^{\frac{1}{p}} \leq \sqrt{C H(\nu\,|\,\mu)}
	\end{align*}
	holds for every $\nu \in P(X)$ where $C$ is some constant and $p \in [1,\infty)$. Let $(Y,\mathcal{G})$ be another measurable space, $\tilde{c} \colon Y \times Y \to \R_+ \cup\{+ \infty\}$ be a measurable function and assume that there is a measurable function $\Psi \colon X \to Y$ for which
	\begin{align*}
		\tilde{c}(\Psi(x),\Psi(y)) \leq L(y) c(x,y)
	\end{align*}
	holds for every $x,y \in X_0$ where $X_0 \subseteq X$ has full measure w.r.t. $\mu$ and $L \colon X \to \R \cup\{+ \infty\}$ is another measurable function. Set $\tilde{\mu} := \mu \circ \Psi^{-1}$. Then for every $\tilde{p} \in [1,p]$,
	\begin{align*}
		\inf_{\tilde{\pi} \in \Pi(\tilde{\nu},\tilde{\mu})} \left( \int_{Y \times Y} \tilde{c}(x,y)^{\tilde{p}} \, d\tilde{\pi}(x,y) \right)^{\frac{1}{\tilde{p}}} \leq  \|L\|_{L^q(\mu)} \sqrt{C H(\tilde{\nu}\,|\,\tilde{\mu})}
	\end{align*}
	holds for very $\tilde{\nu} \in P(Y)$ where $q \in (1, \infty]$ is chosen such that $\frac{1}{q} + \frac{1}{p} = \frac{1}{\tilde{p}}$. 
	%$q = \left( \frac{1}{\tilde{p}} - \frac{1}{p} \right)^{-1}$.
\end{lemma}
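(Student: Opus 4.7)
The plan is to lift $\tilde{\nu}$ to a measure on $X$, apply the transport inequality there, and push the resulting coupling forward through $(\Psi,\Psi)$, with H\"older's inequality absorbing the random Lipschitz constant $L$.

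First, we may assume $H(\tilde{\nu}\,|\,\tilde{\mu})<\infty$; otherwise the right hand side is infinite and there is nothing to prove. In particular $\tilde{\nu}\ll\tilde{\mu}$, so $f:=d\tilde{\nu}/d\tilde{\mu}$ exists. Define
\begin{align*}
	\nu := (f\circ \Psi)\,\mu \in P(X).
\end{align*}
For any measurable $A\subseteq Y$ we have $\nu(\Psi^{-1}(A))=\int \mathbbm{1}_A(\Psi(x))f(\Psi(x))\,d\mu(x) = \int_A f\,d\tilde{\mu} = \tilde{\nu}(A)$, so $\nu\circ\Psi^{-1}=\tilde{\nu}$. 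Moreover $d\nu/d\mu = f\circ\Psi$ and
\begin{align*}
	H(\nu\,|\,\mu) = \int (f\circ\Psi)\log(f\circ\Psi)\,d\mu = \int f\log f\,d\tilde{\mu} = H(\tilde{\nu}\,|\,\tilde{\mu}).
\end{align*}
(The assumption that regular conditional distributions exist on $X$ is what makes such a lifting well defined in a general measurable setting; when $H(\tilde{\nu}\,|\,\tilde{\mu})<\infty$, no disintegration is strictly needed.)

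Next, fix $\varepsilon>0$ and choose $\pi\in\Pi(\nu,\mu)$ with
\begin{align*}
	\left(\int_{X\times X} c(x,y)^p\,d\pi(x,y)\right)^{1/p} \leq \sqrt{C H(\nu\,|\,\mu)}+\varepsilon.
\end{align*}
Set $\tilde{\pi}:= (\Psi,\Psi)_*\pi$. The marginals of $\tilde{\pi}$ are $\Psi_*\nu=\tilde{\nu}$ and $\Psi_*\mu=\tilde{\mu}$, so $\tilde{\pi}\in\Pi(\tilde{\nu},\tilde{\mu})$. Using that the inequality $\tilde{c}(\Psi(x),\Psi(y))\leq L(y)\,c(x,y)$ holds on the full-measure set $X_0\times X_0$ (hence $\pi$-a.s., since the marginals of $\pi$ give full mass to $X_0$), we estimate
\begin{align*}
	\int_{Y\times Y}\tilde{c}(u,v)^{\tilde{p}}\,d\tilde{\pi}(u,v) = \int_{X\times X} \tilde{c}(\Psi(x),\Psi(y))^{\tilde{p}}\,d\pi(x,y) \leq \int_{X\times X} L(y)^{\tilde{p}} c(x,y)^{\tilde{p}}\,d\pi(x,y).
\end{align*}

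Since $\tfrac{1}{p}+\tfrac{1}{q}=\tfrac{1}{\tilde{p}}$, we have $\tfrac{\tilde{p}}{p}+\tfrac{\tilde{p}}{q}=1$, and H\"older's inequality with the conjugate exponents $p/\tilde{p}$ and $q/\tilde{p}$ yields
\begin{align*}
	\int L(y)^{\tilde{p}} c(x,y)^{\tilde{p}}\,d\pi \leq \left(\int L(y)^q\,d\pi\right)^{\tilde{p}/q}\left(\int c(x,y)^p\,d\pi\right)^{\tilde{p}/p}.
\end{align*}
The decisive point is that $L(y)$ depends only on the second coordinate, whose marginal under $\pi$ is exactly $\mu$; therefore $\int L(y)^q\,d\pi(x,y) = \int L^q\,d\mu = \|L\|_{L^q(\mu)}^q$. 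Combining the displays and taking the $\tilde{p}$-th root,
\begin{align*}
	\left(\int \tilde{c}^{\tilde{p}}\,d\tilde{\pi}\right)^{1/\tilde{p}} \leq \|L\|_{L^q(\mu)}\left(\sqrt{C H(\tilde{\nu}\,|\,\tilde{\mu})}+\varepsilon\right),
\end{align*}
and letting $\varepsilon\downarrow 0$ gives the claim.

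The only genuinely subtle point is the H\"older step: one has to notice that the natural conjugacy $\tfrac{1}{q}+\tfrac{1}{p}=\tfrac{1}{\tilde{p}}$ is exactly what allows $L(y)^{\tilde{p}}$ and $c(x,y)^{\tilde{p}}$ to be split so that the $L$-factor integrates against the marginal $\mu$ (making the result dimension-free in $\pi$) while the $c$-factor is controlled by the original $T_p(C)$ inequality. Everything else is routine: lifting $\tilde{\nu}$ by composing its density with $\Psi$, pushing couplings through $(\Psi,\Psi)$, and chasing the entropy equality.
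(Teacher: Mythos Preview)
Your proof is correct and follows essentially the same route as the paper: lift $\tilde{\nu}$ to $X$ via the density $f\circ\Psi$, push a near-optimal coupling through $(\Psi,\Psi)$, and split via H\"older using $\tfrac{1}{p}+\tfrac{1}{q}=\tfrac{1}{\tilde p}$ so that the $L$-factor integrates against the $\mu$-marginal. The only cosmetic difference is that the paper works with an arbitrary lift $\nu\ll\mu$ with $\nu\circ\Psi^{-1}=\tilde\nu$ and then invokes the identity $H(\tilde\nu\,|\,\tilde\mu)=\inf\{H(\nu\,|\,\mu):\nu\circ\Psi^{-1}=\tilde\nu\}$ at the end, whereas you pick the specific minimizer $\nu=(f\circ\Psi)\mu$ up front and verify $H(\nu\,|\,\mu)=H(\tilde\nu\,|\,\tilde\mu)$ directly; the two are equivalent.
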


\begin{proof}
	W.l.o.g. we may assume $C = 1$. Let $\tilde{\nu} \in P(Y)$ and assume that $H(\tilde{\nu}\,|\,\tilde{\mu}) < \infty$. Choose $\nu \in P(X)$ such that $\tilde{\nu} = \nu \circ \Psi^{-1}$ and  $ \nu \ll \mu$ (note that there is at least one $\nu$ which fulfills this condition; e.g. $\nu_0(dx) := \frac{d\tilde{\nu}}{d\tilde{\mu}}(\Psi(x)) \mu(dx)$). Then
	\begin{align*}
		\inf_{\tilde{\pi} \in \Pi(\tilde{\nu},\tilde{\mu})} \int \tilde{c}(x,y)^{\tilde{p}} \, d\tilde{\pi}(x,y) 		
		&\leq \inf_{\pi \in \Pi(\nu,\mu)} \int_{Y \times Y} \tilde{c}(x,y)^{\tilde{p}} \, d(\pi \circ (\Psi\times \Psi)^{-1})(x,y) \\
		&= \inf_{\pi \in \Pi(\nu,\mu)} \int_{X \times X} \tilde{c}(\Psi(x),\Psi(y))^{\tilde{p}} \, d \pi(x,y).
	\end{align*}
	Since $\nu \ll \mu$, $X_0 \times X_0$ has full measure for every $\pi \in \Pi(\nu,\mu)$, therefore
	\begin{align*}
		\inf_{\pi \in \Pi(\nu,\mu)} \int_{X \times X} \tilde{c}(\Psi(x),\Psi(y))^{\tilde{p}} \, d \pi(x,y) 
		&\leq \inf_{\pi \in \Pi(\nu,\mu)} \int_{X \times X} (L(y) c(x,y))^{\tilde{p}} \, d \pi(x,y) \\
		&\leq \|L\|_{L^q(\mu)}^{\tilde{p}} \inf_{\pi \in \Pi(\nu,\mu)} \left( \int_{X \times X} c(x,y)^p \, d \pi(x,y) \right)^{\frac{\tilde{p}}{p}}
	\end{align*}
	by H\"older's inequality. The assertion follows from the identity
	\begin{align}
  H(\tilde{\nu}\,|\,\tilde{\mu}) = \inf \{ H(\nu\,|\,\mu)\ |\ \nu \in P(X)\text{ s.t. } \nu \circ \Psi^{-1} = \tilde{\nu} \}
 \end{align}
  which holds under the assumption that regular conditional distributions exist on $(X, \mathcal{F})$, see \cite[Lemma 2.1]{DGW04}.
\end{proof}

% -------------------------------------------------------------------------------------------
%                         bibliography
% -------------------------------------------------------------------------------------------

%\bibliographystyle{plain}
%\bibliography{../latex-refs/refs}   % expects file "refs.bib"

\bibliographystyle{alpha}
\bibliography{refs}

\end{document}